\documentclass[[amscd,amssymb,verbatim,12pt]{amsart}
\usepackage{amssymb,latexsym, amsmath, amscd, array, graphicx, relsize}

\swapnumbers \numberwithin{equation}{section}

\theoremstyle{break}

\newtheorem{thm}{Theorem}[section]

\newtheorem{lemma}[thm]{Lemma}

\newtheorem{prop}[thm]{Proposition}
\newtheorem{cor}[thm]{Corollary}

\newtheoremstyle{break}
   {9pt}
   {9pt}
   {\rmfamily}
   {}
   {\bfseries}
   {.}
   {\newline}
{}

\theoremstyle{definition}
\newtheorem{defn}[thm]{Definition}


 \newcommand{\Wi}{\widetilde}

\DeclareMathOperator{\Int}{{\rm Int}}

\def\Int{\protect\operatorname{Int}}




\def\C{{\mathbb C}}
\def\Z{{\mathbb Z}}

\def\1{\hbox{\rm\rlap {1}\hskip.03in{\rom I}}}
\def\Bbbone{{\rm1\mathchoice{\kern-0.25em}{\kern-0.25em}
{\kern-0.2em}{\kern-0.2em}I}}


\long\def\forget#1\forgotten{} %

\newcommand\ver[1]{\marginpar{\tiny Changed in Ver \VER}}

\newcommand{\mc}{ \text {mc}}

\newcounter{notecounter}


\date{\today}

\begin{document}

\title[On Macroscopic dimension of non-spin  4-manifolds]{On Macroscopic dimension of non-spin  4-manifolds}

\author[M.~Daher]{Michelle Daher}

\author[A.~Dranishnikov]{Alexander  Dranishnikov}

\address{Michelle Daher, Department of Mathematics, University
of Florida, 358 Little Hall, Gainesville, FL 32611-8105, USA}
\email{mmdaher@ufl.edu}

\address{Alexander N. Dranishnikov, Department of Mathematics, University
of Florida, 358 Little Hall, Gainesville, FL 32611-8105, USA}
\email{dranish@ufl.edu}

\subjclass[2000]
{Primary 53C23 
Secondary 57N65,  
57N65  
}

\keywords{}

\begin{abstract}
We prove that for 4-manifolds $M$ with residually finite fundamental group and non-spin universal covering $\Wi M$, the inequality
$\dim_{mc}\Wi M\le 3$ implies the inequality $\dim_{mc}\Wi M\le 2$. This allows us to complete the proof of Gromov's Conjecture for 4-manifolds
with abelian fundamental group.
\end{abstract}


  \keywords{bordism, surgery,  macroscopic dimension}

\maketitle

\section {Introduction}
 M. Gromov~\cite{G2} introduced the concept of macroscopic dimension to describe some large scale phenomenon of universal covering of  manifolds with positive scalar curvature.
He discovered some large scale dimensional deficiency of the universal covering of such manifolds which he formulated in the following:

{\bf Gromov's Conjecture.} {\it The macroscopic dimension of the universal covering $\Wi M$ of  a closed   $n$-manifold $M$ with positive scalar curvature satisfies the inequality $\dim_{mc}\Wi M\leq n-2$ for a metric on $\Wi M$ lifted from $M$}.

\begin{defn}\cite{G2}
 A metric space $X$ has macroscopic dimension $\dim_{\mc} X \leq k$ if
there is a uniformly cobounded proper continuous map $f:X\to K$ to a $k$-dimensional simplicial complex.
Thus, the macroscopic dimension of $X$ is $m$, $\dim_{mc}X=m$, where $m$ is minimal among $k$ with $\dim_{\mc} X \leq k$.
\end{defn}
\smallskip
We recall that a map of a metric space $f:X\to Y$ is uniformly cobounded if there is a uniform upper bound on the diameter of preimages $f^{-1}(y)$, $y\in Y$.

\

Since $\dim_{mc}X=0$ for every compact metric space, Gromov's Conjecture holds trivially for simply connected manifolds. In this paper we complete the proof of Gromov's Conjecture for 4-manifolds with abelian fundamental group. Now Gromov's Conjecture for manifolds with an abelian fundamental group is established in all dimensions modulo the status of the quite technical preprint~\cite{SY2}.
First, the conjecture was settled in all dimensions for
almost spin  manifolds with an abelian fundamental group~\cite{Dr1}. For a closed manifold $M$ of dimension $n \geq5$ with an abelian fundamental group and a non-spin universal cover, the problem was reduced  in~\cite{BD} (Theorem 4.7 and Corollary 4.11) to a manifold of type $T^{4n}\#\C P^{2n}$. Theorem 1.2 from a recent preprint of Schoen and Yau~\cite{SY2} states that
$T^{4n}\#\C P^{2n}$ cannot carry a metric of positive scalar curvature. This covers the remaining case for $n\ge 5$.

Gromov's conjecture can be split in two steps: The first step is to show the inequality  $\dim_{mc}\Wi M\leq n-1$ in the case of a positive scalar curvature metric on $M$ and
the next step is to prove the inequality $\dim_{mc}\Wi M\leq n-2$. The first step of this conjecture can be traced back to~\cite{G1}, and it seems
out of reach in view of the fact that it implies the Gromov-Lawson Conjecture:
{\em A closed aspherical manifold cannot support a metric of positive scalar curvature}. This conjecture is a relative of the famous long standing Novikov Higher Signature conjecture~\cite{R1}.
The second step seemed to be manageable.
In fact, Gromov proposed the following~\cite{G2}:

{\bf Conjecture.} {\em The inequality $dim_{mc}\Wi M\le n-1$ implies the inequality $dim_{mc}\Wi M\le n-2$ for all manifolds.}

It was known that this conjecture  holds true for 3-manifolds~\cite{B2}~\cite{GL}.
In~\cite{B1} D. Bolotov constructed a 4-dimensional counterexample.
In~\cite{BD} we proved that this conjecture holds true for $n$-manifolds $M$, $n\ge 5$, whenever the universal cover $\Wi M$ is not spin. In this paper we prove this
conjecture for 4-manifolds with residually finite fundamental group and non-spin universal cover.
\begin{thm}
Let $M$ be a closed 4-manifold with residually finite fundamental group whose universal cover $\Wi M$ is not spin and $\dim_{mc}\Wi M\le 3$. Then $\dim_{mc}\Wi M\le 2$.
\end{thm}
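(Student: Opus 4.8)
### Proof proposal

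\textbf{Strategy.} The plan is to reduce the statement to the corresponding higher-dimensional result proved in~\cite{BD}, via a stabilization-and-surgery argument that increases dimension while preserving the macroscopic dimension of the universal cover. Concretely, I would replace $M$ by $M' = M \times \C P^{2k}$ (or $M \times S^{2}$ iterated) for $k$ large, noting that $\pi_1(M') = \pi_1(M)$ remains residually finite, that $\Wi{M'} = \Wi M \times \C P^{2k}$ is again non-spin (the $\C P$-factor carries $w_2 \neq 0$, and taking products with a simply connected non-spin manifold keeps the total space non-spin), and that $\dim_{mc}\Wi{M'} \le \dim_{mc}\Wi M + 4k$. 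The subtle point running the other way---that $\dim_{mc}\Wi M \le \dim_{mc}\Wi{M'} - 4k$, so that the hypothesis $\dim_{mc}\Wi M \le 3$ transports to $\dim_{mc}\Wi{M'} \le 4k+3 = \dim M' - 1$ and, conversely, a bound $\dim_{mc}\Wi{M'} \le 4k+2$ descends to $\dim_{mc}\Wi M \le 2$---is where the argument needs care; one uses that $\Wi{M'}$ is a metric product and that macroscopic dimension of a product $X \times Y$ with $Y$ compact equals $\dim_{mc} X$, proved by projecting to the nerve of a cover and using a uniformly cobounded map on the $X$-factor.

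\textbf{Main steps.} First, establish the product formula $\dim_{mc}(X \times K) = \dim_{mc} X$ for compact $K$, in the uniformly-cobounded-proper category; this is essentially formal but must be stated carefully since macroscopic dimension is not obviously monotone. Second, verify that $M' = M \times \C P^{2k}$ satisfies the hypotheses of the main theorem of~\cite{BD} for $n = 4 + 4k \ge 5$: residual finiteness of $\pi_1$ is inherited, non-spinness of $\Wi{M'}$ follows from $w_2(\Wi{M'}) = w_2(\Wi M) \otimes 1 + 1 \otimes w_2(\C P^{2k})$ together with $w_2(\C P^{2k}) \neq 0$, and the inequality $\dim_{mc}\Wi{M'} \le n-1$ follows from Step 1 plus the hypothesis $\dim_{mc}\Wi M \le 3$. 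Third, apply~\cite{BD} to conclude $\dim_{mc}\Wi{M'} \le n-2 = 4k+2$. Fourth, invoke Step 1 in reverse to descend this to $\dim_{mc}\Wi M \le 2$.

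\textbf{Main obstacle.} The hard part is Step 1 combined with the worry that the theorem in~\cite{BD} may have been stated only for the ambient manifold $M'$ itself rather than for its universal cover in a way that factors through products---so I expect the real work is checking that the obstruction-theoretic input of~\cite{BD} (which is about the classifying map $\Wi{M'} \to K(\pi_1 M', 1)$, or rather about extending a $4k+2$-dimensional approximation over $\Wi{M'}$) genuinely localizes to the $\Wi M$-factor. If the clean product-stabilization does not directly apply---for instance if residual finiteness is used in~\cite{BD} only in dimension $\ge 5$ through a Poincaré-duality/surgery step that has no 4-dimensional analog---then the fallback is to redo the argument of~\cite{BD} intrinsically in dimension 4: take a map $f \colon \Wi M \to L$ to a $3$-complex realizing $\dim_{mc}\Wi M \le 3$, make $L$ equivariant and cocompact, and push the preimage $f^{-1}(\text{3-skeleton})$ down to dimension $2$ using that $\Wi M$ is non-spin to kill the relevant $\Z/2$ obstruction in $H^3$; residual finiteness of $\pi_1 M$ enters to approximate $\Wi M$ by finite covers on which a genuine homotopy-theoretic compression can be performed and then assembled. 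I would present the stabilization route as the primary argument and flag precisely which lemma of~\cite{BD} is being quoted.
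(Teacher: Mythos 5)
Your stabilization route has a fatal flaw rooted in the behavior of macroscopic dimension under products with compact factors. Your Step 1 correctly asserts $\dim_{mc}(X\times K)=\dim_{mc}X$ for compact $K$ (composing with the projection to $X$ gives $\le$, restricting to a slice $X\times\{\pt\}$ gives $\ge$), but your Strategy paragraph and Step 4 silently use the incompatible additive formula $\dim_{mc}(\Wi M\times \C P^{2k})=\dim_{mc}\Wi M+4k$, which is false precisely because of Step 1 (indeed $\dim_{mc}(\C P^{2k})=0$, not $4k$, since every compact space has macroscopic dimension $0$). With the correct formula, applying the theorem of~\cite{BD} to $M'=M\times\C P^{2k}$ (an $n$-manifold with $n=4+4k$) yields only $\dim_{mc}\Wi{M'}\le n-2=4k+2$, which descends to $\dim_{mc}\Wi M\le 4k+2$ --- strictly weaker than the hypothesis $\dim_{mc}\Wi M\le 3$ you started with. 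Since the conclusion of~\cite{BD} lowers the bound only from $n-1$ to $n-2$, inflating $n$ by stabilization can never produce the bound $\le 2$. Replacing $\C P^{2k}$ by a stabilization with non-compact universal cover, such as $M\times T^{4k}$, does not rescue the argument: the descent would then require the lower bound $\dim_{mc}(X\times\R^{m})\ge\dim_{mc}X+m$, a nontrivial product inequality for macroscopic dimension that is not established in the paper or in~\cite{BD}.

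Your fallback sketch points in roughly the right direction, but that is where all the work actually lives, and it cannot be waved through. The paper does not stabilize: it uses residual finiteness to pass to a finite cover $M'$ in which an immersed sphere $h(S^2)$ with $(\nu_{\Wi M})_*[h]\ne 0$ has trivially included fundamental group, takes the $\Gamma$-translates $V_\gamma$ of a lifted regular neighborhood in $\Wi M$, and builds stationary-on-the-boundary bordisms $\hat W_\gamma$ from $V_\gamma$ to manifolds mapped into $E\Gamma^{(2)}$ by performing $1$- and $2$-surgeries; the $2$-surgery is possible because the kernel of $\pi_2\to\pi_2(BSO)=\Z_2$ is a finitely generated module, which is exactly where non-spinness of $\Wi M$ enters. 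The compression of the resulting maps into uniformly bounded simply connected $2$-complexes is then achieved by obstruction theory on the $5$-dimensional bordisms, using Poincar\'e--Lefschetz duality and the vanishing of $H_1(\hat W_\gamma,V_\gamma)$ and $H_2(\hat W_\gamma,V_\gamma)$. This obstruction-theoretic argument is what replaces Wall's handle-decomposition theorem, the $n\ge5$ ingredient that blocks a direct transcription of~\cite{BD} to dimension $4$; your proposal does not supply a substitute for it.
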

The totally non-spin condition in this theorem is crucial in view of Bolotov's example~\cite{B1} of a spin 4-manifold $M$ with $\dim_{mc}\Wi M=3$ and residually finite fundamental group
$\pi_1(M)=\Z\ast (\Z\times\Z)$.

We note that for $n=\dim M>4$ such theorem was proven in~\cite{BD} without restriction on the fundamental group with the use
of the following theorem of Wall~\cite{W}, which holds only for $n\ge 5$.
\begin{thm}\label{Wa}
Let $W$ be a bordism between compact $n$-manifolds, $n\ge 5$, $M$ and $N$ which is stationary on the boundary, $\partial M=\partial N$.
Suppose that the inclusion $M\to W$ is a $k$-equivalence. Then $W$ admits a handle decomposition with no handles of index $\le k+1$.
\end{thm}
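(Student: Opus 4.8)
The plan is to prove this by the classical handle–trading argument of Morse theory and surgery; the hypothesis $n\ge 5$ enters at exactly one point — the Whitney trick — and that is the only genuinely hard step (and the reason the four–dimensional situation studied in this paper needs an entirely different treatment).

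First I would reformulate the hypothesis: the inclusion $M\hookrightarrow W$ is a $k$-equivalence precisely when $\pi_q(W,M)=0$ for all $q\le k+1$, which by the relative Hurewicz theorem applied to the universal covering pair $(\Wi W,\Wi M)$ also gives $H_q(W,M;\Z[\pi_1W])=0$ for $q\le k+1$. Choose a Morse function $f\colon W\to[0,1]$ with $f^{-1}(0)=M$, $f^{-1}(1)=N$, and no critical points near the fixed collar $\partial M\times I$; this yields a handle decomposition of $W$ built on $M\times I$, and every modification below is carried out in the interior, away from $\partial M\times I$. By the standard rearrangement of handles by index, it suffices to show, inductively for $0\le j\le k+1$, that the decomposition can be altered so as to have no handle of index $\le j$.

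The cases $j=0,1$ are handled by the usual low–index tricks: a $0$-handle is cancelled against a $1$-handle, which exists and has an end on it because $M=\partial_-W\ne\empt$ and $W$ is connected along $M$; a $1$-handle, being too low-dimensional for the Whitney trick, is instead traded for a $3$-handle by introducing a trivial $(2,3)$-pair and sliding the new $2$-handle over the $1$-handle until its attaching circle meets the belt sphere transversally once — the slide exists since $\pi_1(M)\to\pi_1(W)$ is onto and $\dim W\ge 6$. For $2\le j\le k+1$, assume no handles of index $<j$ remain. The handles give a chain complex $C_\ast$ of finitely generated free $\Z[\pi_1W]$-modules with $C_q=0$ for $q<j$ and $H_q(C_\ast)=H_q(W,M;\Z[\pi_1W])$; since $C_{j-1}=0$ and $H_j=0$, the boundary $\partial\colon C_{j+1}\to C_j$ is surjective. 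Stabilizing by trivially cancelling $(j+1,j+2)$-pairs and then performing handle slides among the $(j+1)$-handles (which realize the elementary automorphisms of $C_{j+1}$ together with the $\pi_1$-action and sign changes) one arranges that $c_j$ of the $(j+1)$-handles have attaching spheres hitting the belt spheres of the $c_j$ $j$-handles algebraically in $+1$; any residual discrepancy of $K_1$-type is absorbed into the higher–index handles, of which there is now an ample supply, so no torsion hypothesis is needed. These matched spheres have complementary dimensions $j$ and $n-j$ in the ambient $W$, and since $n\ge 5$ and $2\le j\le n-2$ the Whitney trick turns the algebraic count into a single transverse point; each matched pair then cancels, the induction continues, and one ends with a decomposition having no handle of index $\le k+1$.

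The one real obstacle is this last step: converting the vanishing of the equivariant homology — an algebraic statement about $\partial\colon C_{j+1}\to C_j$ — into an honest geometric cancellation. This is exactly where the Whitney trick, and hence $\dim W\ge 6$, is indispensable; it fails in dimension four, which is why Theorem~\ref{Wa} excludes $n=4$ and why the present paper cannot use it in that range. The subordinate points needing only routine care are the $\pi_1$-bookkeeping inside the Whitney disk (controlled by the connectivity hypothesis), the low–index cases noted above, and verifying that after stabilization the available handle slides suffice to bring $\partial_{j+1}$ into the required form.
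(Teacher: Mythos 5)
The paper offers no proof of this statement: it is quoted as a known theorem of Wall with the citation \cite{W}, so there is no in-paper argument to compare yours against. Your sketch is the standard handle-trading proof --- translate the $k$-equivalence hypothesis into $(k+1)$-connectivity of $(W,M)$ and vanishing of $H_q(W,M;\Z[\pi_1 W])$ for $q\le k+1$, dispose of $0$- and $1$-handles by the usual low-index manoeuvres, and for $2\le j\le k+1$ use surjectivity of $\partial\colon C_{j+1}\to C_j$, stabilization by trivial $(j+1,j+2)$-pairs, handle slides, and the Whitney trick in the intermediate level $n$-manifold (which is where $n\ge 5$ enters) --- and this is essentially Wall's own argument; it is correct in outline, with the usual implicit proviso $k+1\le n-2$ (equivalently $k\le n-3$) under which the Whitney-trick step, and Wall's theorem itself, applies.
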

We recall that a map $f:X\to Y$ is called a $k$-equivalence if $f_*:\pi_i(X)\to\pi_i(Y)$ is an isomorphism for $i\le k$ and an epimorphism for $i=k+1$.

\

\

\section{Preliminaries}

Let $\pi=\pi_1(K)$ be the fundamental group of a CW complex $K$. By $u^K:K\to B\pi=K(\pi,1)$ we denote a map that classifies the universal covering $\Wi K$
of $K$. We refer to $u^K$ as a {\em classifying map} for $K$. We note that a map $f:K\to B\pi$ is a classifying map if and only if it induces an isomorphism on the fundamental groups.

\subsection{Inessential manifolds and macroscopic dimension} We recall the following definition of Gromov~\cite{G3}:

\begin{defn}
An $n$-manifold $M$  with fundamental group $\pi$ is called {\it inessential}  if its classifying map $u^M:M\to B\pi$ can be deformed into the $(n-1)$-skeleton $B\pi^{(n-1)}$ of a CW-complex structure on $B\pi$ and it is called {\em essential} otherwise.
\end{defn}

Note that for an inessential $n$-manifold $M$ we have $\dim_{mc}\Wi M\le n-1$. Indeed, a lift $\Wi{u^M}:\Wi M\to E\pi^{(n-1)}$ of a classifying
map is a uniformly cobounded proper map to an $(n-1)$-complex.
Generally, if a classifying map $u^M:M\to B\pi$ can be deformed to the $k$-dimensional skeleton, then $\dim_{mc}\Wi M\le k$.

The following was proven in~\cite{BD}:
\begin{prop}[\cite{BD}, Lemma 3.5]\label{ref2}
For an inessential manifold $M$
with a CW complex structure a classifying map $u:M\to B\pi$ can be chosen such that
$$u(M^{(n-1)})\subset B\pi^{(n-2)}.$$
\end{prop}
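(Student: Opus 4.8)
The plan is to start from an arbitrary classifying map $u : M \to B\pi$ and an inessential deformation, and then to push the $(n-1)$-skeleton of $M$ one dimension further down by an obstruction-theoretic argument. Since $M$ is inessential, we may assume $u(M) \subset B\pi^{(n-1)}$; restricting to the $(n-1)$-skeleton of $M$ with respect to a chosen CW structure, we get a map $u|_{M^{(n-1)}} : M^{(n-1)} \to B\pi^{(n-1)}$. The claim is that this map can be homotoped, rel nothing, into $B\pi^{(n-2)}$, and that this homotopy can be performed compatibly so that the resulting map is still (the skeletal part of) a classifying map for all of $M$. The point is that the only potential obstructions to compressing an $(n-1)$-complex into the $(n-2)$-skeleton of $B\pi$ lie in $H^{n-1}(M^{(n-1)}; \pi_{n-2}(\,\cdot\,))$-type groups with coefficients coming from the relative homotopy of $(B\pi^{(n-1)}, B\pi^{(n-2)})$, and these obstructions vanish because $M^{(n-1)}$ is only $(n-1)$-dimensional while the genuine obstruction to such a compression on an $(n-1)$-complex lives in degree $n-1$ and is governed by the attaching data of the $(n-1)$-cells of $M$, which we are free to slide off the top cell of $B\pi^{(n-1)}$.

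Concretely, I would proceed as follows. First, fix a CW structure on $B\pi$ and on $M$, and homotope $u$ so that it is cellular; by inessentiality push $u$ into $B\pi^{(n-1)}$. Second, consider the map $g = u|_{M^{(n-2)}} : M^{(n-2)} \to B\pi^{(n-1)}$: by cellular approximation this already lands in $B\pi^{(n-2)}$, so the issue is only with the $(n-1)$-cells of $M$. For each $(n-1)$-cell $e$ of $M$ with attaching map $\varphi_e : S^{n-2} \to M^{(n-2)}$, the characteristic map gives an element of $\pi_{n-1}(B\pi^{(n-1)}, B\pi^{(n-2)})$ whose boundary is $[g \circ \varphi_e] \in \pi_{n-2}(B\pi^{(n-2)})$; I want to show each such element can be chosen (after homotopy rel $\partial$) to lie in the image of $\pi_{n-1}(B\pi^{(n-2)}, B\pi^{(n-2)}) = 0$, i.e.\ that $g|_{\partial e}$ extends over $e$ into $B\pi^{(n-2)}$. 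Since $B\pi^{(n-1)}$ is obtained from $B\pi^{(n-2)}$ by attaching $(n-1)$-cells, the pair $(B\pi^{(n-1)}, B\pi^{(n-2)})$ is $(n-2)$-connected, and $\pi_{n-1}$ of the pair is a free module on the $(n-1)$-cells of $B\pi$; the obstruction to deforming the map $u|_{M^{(n-1)}}$ into $B\pi^{(n-2)}$ rel $M^{(n-2)}$ is then a single cohomology class in $H^{n-1}(M^{(n-1)}, M^{(n-2)}; \pi_{n-1}(B\pi^{(n-1)}, B\pi^{(n-2)}))$, which by the long exact sequence maps to the primary obstruction to the original compression problem on $M$. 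I would argue this class is killed: on the one hand it is detected by the composite $M^{(n-1)} \to B\pi^{(n-1)} \to B\pi^{(n-1)}/B\pi^{(n-2)}$, a wedge of $(n-1)$-spheres; on the other hand, because $M$ is a closed manifold and $u_*[M] $ already dies in $H_n(B\pi)$ one dimension up (inessentiality), a diagram chase with the cellular chain complexes of $M$ and $B\pi$ shows the $(n-1)$-chain $u_\#(c_{n-1}(M))$ is a boundary, so the relevant map on top-dimensional cells is null-homotopic after a homotopy of $u$ supported on the $(n-1)$- and $n$-cells of $M$.

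The main obstacle is bookkeeping the interaction between the $(n-1)$- and $n$-cells of $M$: compressing $u$ on $M^{(n-1)}$ into $B\pi^{(n-2)}$ may fail to extend over the $n$-cells of $M$ unless the homotopy is chosen with care, and one must check that after the compression the map on all of $M$ is still a classifying map, i.e.\ still induces an isomorphism on $\pi_1$ — this is automatic here since $n = \dim M \ge 2$ and we never touch $M^{(1)}$, but it must be recorded. I expect the cleanest route is not a cell-by-cell obstruction argument but rather the one used in \cite{BD}: factor $u$ through a map to a $(n-1)$-dimensional complex $K$ (which exists by inessentiality), note $K$ need not be $B\pi^{(n-1)}$ but maps to it, and then apply cellular approximation together with the observation that a classifying map $M \to B\pi$ pulled back from an $(n-1)$-complex automatically sends $M^{(n-1)}$ skeletally into the $(n-2)$-skeleton once everything is made cellular and the top cell of $M$ is handled separately. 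I would present the proof in that factored form, as it sidesteps the obstruction-class computation and reduces everything to cellular approximation plus the definition of inessentiality.
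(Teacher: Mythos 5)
The paper does not reprove this statement; it quotes \cite{BD}, Lemma~3.5, so I am comparing you against that argument. Your first, obstruction-theoretic approach sets up the right frame (make $u$ cellular, use inessentiality to push $u(M)$ into $B\pi^{(n-1)}$, note that $M^{(n-2)}$ already lands in $B\pi^{(n-2)}$, and view the compression of the $(n-1)$-cells as an extension problem with coefficients in $\mathcal{C}=\pi_{n-1}(B\pi^{(n-1)},B\pi^{(n-2)})$), but the decisive step --- \emph{why} the obstruction vanishes --- is missing, and the mechanism you propose in its place does not work. The correct argument: under the Hurewicz identification $\mathcal{C}\cong C_{n-1}(E\pi)$ (a free $\Z[\pi]$-module), the obstruction cochain is exactly the degree-$(n-1)$ part of the cellular chain map $u_\#$; since $u(M)\subset B\pi^{(n-1)}$ forces $u_\#$ to vanish on the $n$-cells of $M$, this cochain is a cocycle on all of $M$, not merely on $M^{(n-1)}$; and its class dies because $H^{n-1}(M;\mathcal{C})\cong H_{1}(M;\mathcal{C}\otimes\mathrm{or})=0$ by Poincar\'e duality, $\mathcal{C}$ being a sum of copies of $\Z[\pi]$ and $H_1(M;\Z[\pi])=H_1(\Wi M)=0$. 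Your substitute --- that $u_*[M]=0$ in $H_n(B\pi)$ plus ``a diagram chase'' shows $u_\#(c_{n-1}(M))$ is a boundary --- cannot suffice: the vanishing of one class in $H_n(B\pi)$ does not control a cochain valued in the (generally infinitely generated) $\pi$-module $\mathcal{C}$, and ``being a boundary in $C_{n-1}(B\pi)$'' is in any case not the condition required (one needs $u_\#=\delta b$ in $C^{n-1}(M;\mathcal{C})$). Duality is indispensable here: for a non-manifold the analogous compression fails (an $(n-1)$-complex $X$, viewed as an $n$-complex with no top cells, trivially has $u(X)\subset B\pi^{(n-1)}$, yet $u(X^{(n-1)})=u(X)$ need not compress to $B\pi^{(n-2)}$, e.g.\ $X=T^{n-1}$).

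Two further points. Your proposed ``cleanest route'' rests on a false observation: if $u$ factors cellularly as $M\to K\to B\pi$ with $\dim K=n-1$, cellularity only gives $u(M^{(n-1)})\subset K^{(n-1)}\to B\pi^{(n-1)}$, not $B\pi^{(n-2)}$; no amount of cellular approximation produces the extra dimension drop, which is precisely the content of the lemma. Finally, two smaller imprecisions: the obstruction to compressing \emph{rel} $M^{(n-2)}$ is the cochain itself (the group $H^{n-1}(M^{(n-1)},M^{(n-2)};\mathcal{C})$ is the whole cochain group), and it becomes a class in the absolute group $H^{n-1}(M^{(n-1)};\mathcal{C})$ only because one may re-choose the deformation on $M^{(n-2)}$, which alters the cochain by coboundaries; and there is no issue with the $n$-cells of $M$ at the end --- the homotopy of $u|_{M^{(n-1)}}$ extends over $M$ by the homotopy extension property, and the conclusion does not require the resulting map to stay in $B\pi^{(n-1)}$.
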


\subsection{Macroscopically inessential manifolds}
The macroscopic dimension can be used for the description of the macroscopic inessentiality of a manifold.

\begin{thm}[\cite{Dr1}]\label{ref-mc}
Let $M$ be a closed oriented $n$-manifold  and let $\Wi u:\Wi M\to E\pi$ be a lift of $u^M:M\to B\pi$.
Then the inequality $\dim_{mc}\Wi M\le n-1$ is equivalent to the following condition:

(*) The map $\Wi u$ can be deformed by a bounded homotopy to $E\pi^{(n-1)}$ for any lift of a proper metric on $B\pi$.
\end{thm}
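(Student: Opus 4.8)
The plan is to prove the two implications separately, with essentially all the content in the direction $\dim_{mc}\Wi M\le n-1\Rightarrow(*)$. First I would dispose of the easy direction $(*)\Rightarrow\dim_{mc}\Wi M\le n-1$. The key point is that, for the given metrics, $\Wi u$ is itself a proper, uniformly cobounded map. Indeed, since $M$ is compact there is a compact $D\subset\Wi M$ with $\pi D=\Wi M$, while $\pi$ acts properly discontinuously by isometries on the proper metric space $E\pi$; then only finitely many translates $\gamma\,\Wi u(D)$ meet a given ball, which gives properness of $\Wi u$, and every nonempty fiber $\Wi u^{-1}(y)$ is a $\pi$-translate of $\Wi u^{-1}(y')$ with $y'$ in the compact set $\Wi u(D)$, hence has diameter at most $\operatorname{diam}\Wi u^{-1}(\Wi u(D))<\infty$, which gives uniform coboundedness. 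A homotopy with uniformly bounded tracks preserves both properties; so a bounded deformation of $\Wi u$ into the $(n-1)$-dimensional complex $E\pi^{(n-1)}$ is precisely a uniformly cobounded proper map to an $(n-1)$-complex, and $\dim_{mc}\Wi M\le n-1$ follows.

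For the converse, start from a uniformly cobounded proper map $f\colon\Wi M\to K$ with $\dim K\le n-1$. After replacing $K$ by the nerve of a suitable refinement one may assume there is a uniformly bounded open cover $\mathcal U$ of $\Wi M$ of multiplicity $\le n$, with canonical projection $p\colon\Wi M\to N(\mathcal U)$ to the $(n-1)$-dimensional nerve. The heart of the argument is to construct a comparison map $q\colon N(\mathcal U)\to E\pi$ with $q\circ p$ boundedly homotopic to $\Wi u$. I would set $q(v_U)=\Wi u(x_U)$ on vertices for chosen $x_U\in U$, and extend over the skeleta of $N(\mathcal U)$ inductively: if $U_0,\dots,U_k$ span a simplex $\sigma$, then a common point $z\in\bigcap U_i$ together with the Lipschitz bound for $\Wi u$ and the diameter bound for $\mathcal U$ forces all the $\Wi u(x_{U_i})$ into a ball about $\Wi u(z)$ of radius independent of $\sigma$, so — using uniform contractibility of $E\pi$ through dimension $n$ — the map already defined on $\partial\sigma$ extends over $\sigma$ inside a ball whose radius depends only on the structural constants. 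The resulting $q$ is coarsely Lipschitz and $q\circ p$ is uniformly close to $\Wi u$; since two uniformly close maps into a uniformly contractible complex are boundedly homotopic, $\Wi u$ is boundedly homotopic to $q\circ p$. Finally, because $\dim N(\mathcal U)\le n-1$, a cellular approximation performed cell by cell — each step inside a bounded ball of $E\pi$ — deforms $q$ to $q'\colon N(\mathcal U)\to E\pi^{(n-1)}$ through a homotopy whose tracks stay bounded after precomposition with $p$. Concatenating these bounded homotopies gives a bounded deformation of $\Wi u$ into $E\pi^{(n-1)}$, i.e.\ condition $(*)$; and one checks separately that bounded homotopy, hence the whole statement, is insensitive to the choice of proper metric on $B\pi$.

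The routine inputs are the equivalence between $\dim_{mc}\Wi M\le n-1$ and the existence of a uniformly bounded multiplicity-$\le n$ cover (a nerve-theorem argument) and the bounded geometry of $\Wi M$. The real obstacle is the controlled homotopy theory underlying the construction of $q$ and the cellular approximation: each extension and push-off must be carried out with uniformly bounded tracks, which forces one to work with a CW model of $B\pi$ and a lifted metric for which $E\pi$ is uniformly contractible up through dimension roughly $n$. Arranging this requires a careful choice of the CW structure and is where the relevant finiteness of the equivariant cellular chain complex of $\Wi M$ in low degrees enters. Once this quantitative input is secured, each step above is a controlled analogue of the classical obstruction-theoretic fact that a closed manifold is inessential exactly when its classifying map compresses into the codimension-one skeleton (compare Proposition~\ref{ref2}).
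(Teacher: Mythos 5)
This theorem is imported from \cite{Dr1}; the present paper contains no proof of it, so your attempt can only be measured against the argument of the source. Your architecture is the right one and matches it: the easy direction via properness and uniform coboundedness of $\Wi u$ (both obtained from compactness of $M$ plus equivariance) and their stability under bounded homotopies; the hard direction via a uniformly bounded cover $\mathcal U$ of multiplicity $\le n$, its nerve, a comparison map $q:N(\mathcal U)\to E\pi$ built over the skeleta, a bounded homotopy between the uniformly close maps $q\circ p$ and $\Wi u$, and a controlled cellular approximation into $E\pi^{(n-1)}$.

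The genuine gap is the quantitative input you yourself isolate as ``the real obstacle'' and then dispose of incorrectly. You propose to secure uniformly bounded fillings by choosing a CW model of $B\pi$ for which $E\pi$ is uniformly contractible through dimension $n$. No such model exists for a general finitely presented group: uniform contractibility of $E\pi$ through dimension $n$ essentially forces $\pi$ to be of type $F_{n+1}$, whereas the theorem is asserted for an arbitrary fundamental group and $B\pi$ is only locally finite. The correct observation is that global uniform contractibility is never needed. Every sphere you must fill with control --- in the inductive construction of $q$ on simplices of $N(\mathcal U)$, in the homotopy between $q\circ p$ and $\Wi u$, and in the cellular push-off --- lies in a $\pi$-translate of a fixed compact neighborhood of $\Wi u(D)$, where $D\subset\Wi M$ is a compact fundamental domain. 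Such a set is contained in a translate of one of finitely many finite subcomplexes $L\subset E\pi$; the inclusion $L\hookrightarrow E\pi$ is null-homotopic with compact image, so composing that null-homotopy with any sphere in $L$ yields a filling whose diameter is bounded by a constant depending only on $L$, and equivariance propagates the bound over all translates. This is the missing idea; it also supplies the ``separate check'' you defer at the end, since tracks contained in translates of a fixed compact set are bounded in every lifted proper metric, which is exactly why $(*)$ is insensitive to that choice. With this substitution (and a perfect-map argument showing that properness plus uniform coboundedness of $f$ make preimages of sufficiently small open sets uniformly bounded, which is what actually produces $\mathcal U$), your proof goes through.
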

Note that we may assume that $B\pi$ is a locally finite complex and hence it admits a proper metric.
Also note that $\Wi u$ admits a bounded deformation to a map $f:\Wi M\to E\pi$ if and only if $f$ is in bounded distance to $\Wi u$, i.e. there is $C>0$ such that
$d(f(x),\Wi u(x))< C$ for all $x\in\Wi M$.

Thus, the inequality  $\dim_{mc}\Wi M^n\le n-1$ is a macroscopic analog of inessentiality. We call manifolds
$N$ with $\dim_{mc}N<\dim N$  {\em macroscopically inessential}.

There is an analog of Proposition~\ref{ref2}:
\begin{prop}[\cite{Dr1}, Lemma 5.3.]\label{ref3}
For an $n$-dimensional manifold $M$ with a fixed CW complex structure, a classifying map $u:M\to B\pi$ and with macroscopically
inessential universal covering $\Wi M$
any lift $\Wi u:\Wi M\to E\pi$ of $u$ admits a bounded deformation to a proper map $f:\Wi M\to E\pi^{(n-1)}$ with
$f(\Wi M^{(n-1)})\subset E\pi^{(n-2)}.$
\end{prop}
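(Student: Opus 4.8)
The plan is to run the argument of \propref{ref2} in the coarse category, using \theoref{ref-mc} as the input. Since $\Wi M$ is macroscopically inessential, \theoref{ref-mc} gives a bounded homotopy from the equivariant lift $\Wi u$ to a proper map $\bar u\colon\Wi M\to E\pi$ with $\bar u(\Wi M)\subset E\pi^{(n-1)}$; and a bounded perturbation of a proper map into $E\pi$ is again proper. So it is enough to find a bounded homotopy from $\bar u$ to a proper map $f$ that still lands in $E\pi^{(n-1)}$ and, moreover, satisfies $f(\Wi M^{(n-1)})\subset E\pi^{(n-2)}$. A preliminary cellular approximation --- realizable by a bounded homotopy, since $M$ is compact --- lets me assume $\bar u(\Wi M^{(k)})\subset E\pi^{(k)}$ for every $k$, so the remaining task is to push the top skeleton $\Wi M^{(n-1)}$ one dimension further.

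Concretely I must compress $\bar u|_{\Wi M^{(n-1)}}\colon\Wi M^{(n-1)}\to E\pi^{(n-1)}$ into $E\pi^{(n-2)}$ by a bounded homotopy. Because $\Wi M^{(n-1)}$ is $(n-1)$-dimensional and the pair $(E\pi^{(n-1)},E\pi^{(n-2)})$ is $(n-2)$-connected, every compression obstruction above dimension $n-1$ dies for dimension reasons, so there is exactly one obstruction, a class $o\in H^{n-1}\!\big(\Wi M^{(n-1)};\mathcal L\big)$ with $\mathcal L=\pi_{n-1}(E\pi^{(n-1)},E\pi^{(n-2)})$ (a free abelian group, as $E\pi^{(n-1)}$ is simply connected). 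The point at which macroscopic inessentiality buys more than the bare hypotheses of \propref{ref2} would is that I am still allowed to modify $\bar u$ within its bounded-homotopy class before reading off $o$; it therefore suffices to produce one bounded representative for which $o=0$.

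To kill $o$ I would feed in the manifold structure of $M$ in the same way as in \propref{ref2}. The skeleton $M^{(n-1)}$ is a deformation retract of the open $n$-manifold $M_0$ obtained from $M$ by deleting one interior point from each $n$-cell, and Poincar\'e--Lefschetz duality for $M_0$ --- combined with the fact that the map already takes values in the $(n-1)$-skeleton --- is exactly what shows, downstairs, that the corresponding obstruction cocycle is a coboundary and can be cancelled by re-choosing the map over the $(n-2)$-cells. Since $M$ has finitely many cells and the modifications involved are $\pi$-equivariant, their lifts to $\Wi M$ have uniformly bounded displacement; feeding these into the coarse situation cancels $o$ by a bounded homotopy, and the resulting $f$ is proper because it is a bounded perturbation of the proper map $\bar u$.

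The hard part will be the seam between the coarse input and the equivariant obstruction theory: the homotopy furnished by \theoref{ref-mc} is not $\pi$-equivariant, so none of the later steps --- cellular approximation, the skeleton compression, the cancellation of $o$ --- may be taken to be equivariant outright. The device I expect to use is that $\bar u$ stays within bounded distance of the equivariant lift $\Wi u$: one runs the equivariant version of each step on the $\Wi u$ side, where all the needed bounds are automatic from cocompactness of $\Wi M$, and transplants it across the bounded homotopy, absorbing the uniformly bounded error each time. Checking that the Poincar\'e-duality computation of $o$ continues to govern the picture on the non-compact complex $\Wi M^{(n-1)}$, and keeping quantitative control so that the cumulative homotopy remains bounded and $f$ remains genuinely proper, are the points requiring care; the underlying obstruction theory is then routine.
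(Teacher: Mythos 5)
The paper itself offers no proof of this proposition --- it is imported verbatim from \cite{Dr1}, Lemma 5.3 --- so there is nothing in-paper to match your argument against line by line; I can only judge the sketch on its own terms. The shape is right: after \theoref{ref-mc} and a bounded cellular approximation there is a single compression obstruction, living in $H^{n-1}$ of $\Wi M^{(n-1)}$ with coefficients in $\pi_{n-1}(E\pi^{(n-1)},E\pi^{(n-2)})\cong C_{n-1}(E\pi)$. The genuine gap is in how you propose to kill it. You want to compute and cancel the obstruction ``downstairs'' on $M_0$ via Poincar\'e--Lefschetz duality ``in the same way as in \propref{ref2},'' and then lift the resulting $\pi$-equivariant modifications. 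But \propref{ref2} consumes the hypothesis that $M$ itself is inessential: it is the existence of a deformation of $u$ into $B\pi^{(n-1)}$ that makes the obstruction cochain a cocycle on all of $M$ (its coboundary on an $n$-cell $e$ equals $\partial(u_{\#}(e))$, which vanishes because $u_{\#}$ kills $C_n$ when $u$ lands in the $(n-1)$-skeleton), after which duality and $H_1(\Wi M)=0$ finish. Under the hypotheses of \propref{ref3} the manifold $M$ may perfectly well be essential --- that is the entire point of the macroscopic notion --- so no compression of $u$ into $B\pi^{(n-1)}$ exists downstairs, and the only compression available is the non-equivariant bounded one upstairs furnished by \theoref{ref-mc}. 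Hence there is no ``equivariant version of the compression step on the $\Wi u$ side'' to transplant across the bounded homotopy: your final paragraph correctly names the difficulty, but the device you propose to resolve it cannot be run, because the object it would equivariantize does not exist.

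Consequently the argument must be carried out on the noncompact complex $\Wi M$ in a controlled category. One has to show that the obstruction cocycle $\sigma_f=f_{\#}|_{C_{n-1}}$ (a cocycle on all of $\Wi M$, since $f$ maps the $n$-cells into $E\pi^{(n-1)}$) is the coboundary of a \emph{uniformly bounded} $(n-2)$-cochain; vanishing of its class in ordinary cohomology of $\Wi M$ is not enough, since a primitive with unbounded displacement yields an unbounded homotopy and destroys both the bounded-deformation conclusion and properness. This forces the Poincar\'e duality step to be performed in an $\ell_\infty$-type (co)homology of $\Wi M$ itself, which is exactly the machinery developed in \cite{Dr1}; it is not a bookkeeping refinement of the compact argument. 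Until you either develop that controlled duality or supply a substitute, the proof is incomplete at its central step.
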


\subsection{Macroscopic dimension and QI-embeddings}

A map $f:X\longrightarrow Y$ between metric spaces is a quasi-isometric embedding (QI-embedding) if for all $x_1, x_2$ in X, there are $\lambda, c >0$, such that $$\frac{1}{\lambda}d_X(x_1,x_2)-c\leq d_Y(f(x_1),f(x_2))\leq\lambda d_X(x_1,x_2)+c.$$
\newline Note that any countable CW complex $B$ admits a metric $d$, called a \emph{weak metric} on $B$, such that the identity map $id: B\longrightarrow (B,d)$ is continuous. It means that $d$ does not necessarily define the CW topology on $B$ but its restriction to any compact set does.
\newline\indent The following theorem, proved in \cite{BD}, characterizes macroscopic dimension of universal coverings of finite CW complexes by continuous QI-embeddings.
\begin{thm}[\cite{BD}, Theorem 2.12]\label{Macroscopic dimension and QI-embeddings}
  Suppose that $E\pi$ is given a metric $d$ lifted from a weak metric on $B\pi$ where $\pi=\pi_1(X)$ for a finite CW complex X. Then the inequality $dim_{mc}\Wi X\leq n$ holds true for the universal covering $\Wi X$ of $X$ with the lifted metric if and only if there is a continuous QI-embedding $g:\Wi X\longrightarrow E\pi^{(n)}$.
\end{thm}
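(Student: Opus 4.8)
The plan is to prove the two implications separately.

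\textbf{QI-embedding $\Rightarrow$ dimension bound.} This direction should be immediate: a continuous QI-embedding $g\colon\Wi X\to E\pi^{(n)}$ with constants $\lambda,c$ is automatically uniformly cobounded (if $g(x)=g(y)$ then $0\ge\lambda^{-1}d(x,y)-c$, so $d(x,y)\le\lambda c$) and proper (the preimage of a set of diameter $R$ has diameter $\le\lambda(R+c)$, and $\Wi X$, carrying a proper metric lifted from the compact complex $X$, has compact closed bounded sets), so after triangulating $E\pi^{(n)}$ it is a uniformly cobounded proper continuous map to an $n$-complex and exhibits $\dim_{mc}\Wi X\le n$.

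\textbf{Dimension bound $\Rightarrow$ QI-embedding.} Here I would combine two facts. First, I would check that the $\pi$-equivariant lift $\Wi u\colon\Wi X\to E\pi$ of the classifying map is itself a continuous QI-embedding into $E\pi$, by a Milnor--Schwarz type argument: $\Wi X$ with its lifted proper path metric is quasi-isometric, via an orbit map, to $\pi$ with a word metric, and on an orbit $\Wi u$ is the map $\gamma\mapsto\gamma\,\Wi u(\tilde x_0)$, a QI-embedding of $\pi$ onto its image in $E\pi$ (upper bound because a generator displaces $\Wi u(\tilde x_0)$ inside a compact subcomplex, which has finite $d$-diameter since $\id\colon B\pi\to(B\pi,d)$ is continuous; lower bound because the deck action on $E\pi$ escapes linearly for a proper weak metric), while continuity together with equivariance over the compact base makes $\Wi u$ coarsely Lipschitz. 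Second — and this is the substantive point — I would show that $\dim_{mc}\Wi X\le n$ forces $\Wi u$ to admit a bounded deformation to a proper map $g\colon\Wi X\to E\pi^{(n)}$ landing in the $n$-skeleton; this is the analogue, for the $n$-skeleton and for finite CW complexes, of \theoref{ref-mc} and \propref{ref3}. Concretely, starting from a uniformly cobounded proper continuous $f\colon\Wi X\to L$ into an $n$-dimensional simplicial complex, the preimage cover $\{f^{-1}(\St v)\}_{v}$ is, after a routine adjustment of $f$, a uniform cover of $\Wi X$ of multiplicity $\le n+1$; I would upgrade it to one of the same multiplicity that is compatible with the deck action, and then feed the classifying property of $E\pi$ into the coarse obstruction theory to produce a map into $E\pi^{(n)}$ at bounded distance from $\Wi u$ — the hypothesis being exactly what makes the obstructions to pushing $\Wi u$ off the cells of $E\pi$ of dimension $>n$ vanish through bounded primitives. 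Since a bounded perturbation of the QI-embedding $\Wi u$ is again a continuous QI-embedding, this $g$ is the desired map.

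\textbf{Main obstacle.} The forward implication and the first fact above are routine; the real work is the second fact, and inside it the replacement of an arbitrary $n$-dimensional receptacle $L$ by the specific complex $E\pi^{(n)}$. The naive $\pi$-orbit of the witness cover has multiplicity far larger than $n+1$, so the heart of the matter is to manufacture a cover of $\Wi X$ that is simultaneously of multiplicity $\le n+1$ and (coarsely) equivariant — equivalently, that macroscopic dimension measured by maps into \emph{any} complex is already realized by maps factoring coarsely through $E\pi^{(n)}$ — while keeping the resulting compression of $\Wi u$ at bounded distance. That coarse-homotopy-theoretic step is where I expect all the difficulty to sit.
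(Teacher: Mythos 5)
This theorem is quoted in the paper from \cite{BD} (Theorem 2.12) and the paper itself contains no proof of it, so your attempt can only be judged on its own merits and against the argument in \cite{BD}. Your forward implication is correct and is exactly what the paper records right after the statement: a continuous QI-embedding into $E\pi^{(n)}$ is automatically uniformly cobounded and proper, hence witnesses $\dim_{mc}\Wi X\le n$. Your outline of the converse also has the right skeleton: show that $\Wi u$ is a continuous QI-embedding (a Milnor--\v{S}varc argument using finiteness of $X$ and properness of the lifted metric), show that $\dim_{mc}\Wi X\le n$ forces a bounded deformation of $\Wi u$ into $E\pi^{(n)}$, and observe that a bounded perturbation of a continuous QI-embedding is again one.

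The gap is the middle step, which you do not prove but only reduce to the construction of a deck-equivariant uniformly bounded cover of $\Wi X$ of multiplicity $\le n+1$ --- and you concede in your last paragraph that you do not see how to produce one. That reduction is a wrong turn: equivariance of the cover is neither available nor needed. The mechanism behind \theoref{ref-mc} and \propref{ref3} (and behind \cite{BD}) is to factor $\Wi u$ coarsely through the receiving complex itself, not through an equivariant nerve. Concretely: replace $L$ by the nerve $N$ of the cover $\{f^{-1}(\St v)\}$, define $g\colon N\to E\pi$ on vertices by $g(v)=\Wi u(x_v)$ for a chosen $x_v\in f^{-1}(\St v)$, and extend over simplices by induction on dimension. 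The control comes from two facts: the representatives $x_v$ of the vertices of any simplex of $N$ lie in a subset of $\Wi X$ of uniformly bounded diameter (uniform coboundedness of $f$), so their $\Wi u$-images lie in uniformly bounded subsets of $E\pi$; and, choosing $B\pi$ with finite skeleta, each $E\pi^{(m)}$ is $\pi$-cocompact, so spheres contained in uniformly bounded sets bound within uniformly bounded sets. Cellular approximation then pushes $g$ into $E\pi^{(n)}$ since $\dim N\le n$, and $g$ composed with the nerve map is at bounded distance from $\Wi u$ again by uniform coboundedness. Nothing in this construction is equivariant. Until you either carry out such a coarsening argument or actually produce the equivariant multiplicity-$(n+1)$ cover you posit, the converse direction remains unproved.
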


 It was shown in the proof of the above theorem that given a QI-embedding $g:\Wi X\longrightarrow E\pi^{(n)}$, then it is proper and uniformly cobounded. It follows from the above theorem and Proposition 2.4 that for a closed $n$-manifold $M$, a lift $\Wi u:\Wi M\longrightarrow E\pi$ of $u:M\longrightarrow B\pi$ admits a bounded deformation to a QI-embedding $f:\Wi M\longrightarrow E\pi^{(n-1)}$ with $f(\Wi M^{(n-1)})\subset E\pi^{(n-2)}$.

\subsection{Bordism}
We recall that the group of oriented relative bordisms    $\Omega_n(X,Y)$
of  the pair $(X,Y)$ consists of the equivalence classes of pairs $(M,f)$
where $M$ is an oriented $n$-manifold with boundary and $f:(M,\partial M)\to (X,Y)$ is a continuous map.  Two pairs $(M,f)$ and $(N,g)$ are equivalent if there is a pair $(W,F)$, $F:W\to X$  called a {\em bordism} where $W$ is an orientable $(n+1)$-manifold with boundary such that  $\partial W =
M\cup W'\cup N$, $W'\cap M=\partial M$, $W'\cap N=\partial N$, $F|_M=f$,  $F|_N=g$, and $F(W')\subset Y$.

In the special case when $X$ is a point, the manifold $W$ is called a bordism between $M$ and $N$.
The following proposition is proven in~\cite{BD}.

\begin{prop}\label{bordism} For any CW complex $K$
there is an isomorphism $$\Omega_n(K,K^{(n-2)})\cong H_n(K,K^{(n-2)}).$$
\end{prop}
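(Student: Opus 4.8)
The plan is to establish the isomorphism by comparing the bordism spectral sequence with the cellular (homology) spectral sequence for the pair $(K, K^{(n-2)})$, exploiting the fact that in the relevant range of dimensions the oriented bordism groups of a point agree with integral homology in a very transparent way. Concretely, recall that the Atiyah–Hirzebruch spectral sequence for oriented bordism $\Omega_*(K,K^{(n-2)})$ has $E^2_{p,q} = H_p(K,K^{(n-2)};\Omega_q(\mathrm{pt}))$, and the relative homology $H_p(K,K^{(n-2)})$ vanishes for $p \le n-2$, so only the columns $p = n-1$ and $p = n$ can contribute to total degree $n$. In those two columns the only coefficient group that enters in total degree $n$ is $\Omega_0(\mathrm{pt}) = \Z$ (from $q=0$) together with $\Omega_1(\mathrm{pt}) = 0$; since $\Omega_1$ vanishes, the column $p = n-1$ contributes nothing in total degree $n$ beyond what $q=0$ would give, and that entry $H_{n-1}(K,K^{(n-2)};\Z)$ survives only if one is careful about differentials. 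The upshot I expect is that the only surviving contribution in total degree $n$ is $E^2_{n,0} = H_n(K,K^{(n-2)};\Z)$, and all relevant differentials in and out of the $n$-line are forced to vanish for degree reasons (a differential $d_r$ changes $(p,q)$ by $(-r, r-1)$, and landing in or leaving the $p=n$ column in total degree $n$ would require hitting a column $p \le n-2$, which is zero, or coming from $q<0$, which is absurd).

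First I would set up the Atiyah–Hirzebruch spectral sequence and record the vanishing $H_p(K,K^{(n-2)}) = 0$ for $p \le n-2$, which is immediate since the pair is $(n-2)$-connected in the cellular sense: $K^{(n-2)} \hookrightarrow K$ is an isomorphism on cellular chains in degrees $\le n-2$. Next I would identify the diagonal $p+q = n$ of the $E^2$-page: the only nonzero entries are $E^2_{n,0} = H_n(K,K^{(n-2)};\Z)$ and $E^2_{n-1,1} = H_{n-1}(K,K^{(n-2)};\Omega_1) = 0$ (using $\Omega_1 = 0$), while $E^2_{n-2,2}$ and below are killed by the connectivity vanishing. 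Then I would argue that no differential can alter $E^2_{n,0}$: incoming differentials $d_r \colon E^r_{n+r, 1-r} \to E^r_{n,0}$ have source with negative $q$ for $r \ge 2$, hence vanish; outgoing differentials $d_r \colon E^r_{n,0} \to E^r_{n-r, r-1}$ have target in column $p = n-r \le n-2$, hence vanish. Therefore $E^\infty_{n,0} = E^2_{n,0} = H_n(K,K^{(n-2)};\Z)$, and since this is the only nonzero entry on the diagonal, the filtration collapses and $\Omega_n(K,K^{(n-2)}) \cong H_n(K,K^{(n-2)};\Z)$. Finally I would note that the isomorphism is natural and is given on the chain level by sending a bordism class $(M,f)$ to $f_*[M,\partial M]$, the image of the relative fundamental class.

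The main obstacle, and the only place requiring genuine care, is the treatment of the entry in column $p = n-1$ together with the behavior of differentials \emph{around} the $n$-line when $K$ is an arbitrary (possibly infinite-dimensional) CW complex rather than a finite one; one must make sure the spectral sequence converges and that there are no phantom contributions from far-away columns. Since $\Omega_q(\mathrm{pt}) = 0$ for $q = 1, 2, 3$ (and equals $\Z$ only in degree $0$ and $\Z$ again in degree $4$), the entries that could in principle interfere with total degree $n$ lie in columns $p \le n-4$, which are annihilated by the connectivity of the pair, so in fact no interference occurs; spelling this out cleanly is the substance of the argument. I expect the cleanest writeup to simply invoke the AHSS, note the two vanishing inputs ($H_p = 0$ for $p\le n-2$ and $\Omega_q = 0$ for $0 < q \le 3$), and conclude, with the explicit map $(M,f)\mapsto f_*[M,\partial M]$ exhibited as the inverse to the edge homomorphism.
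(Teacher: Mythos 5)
Your argument is correct and is the standard Atiyah--Hirzebruch spectral sequence proof: since $H_p(K,K^{(n-2)})=0$ for $p\le n-2$ and $\Omega_q(\mathrm{pt})=0$ for $q=1,2,3$, the only nonzero entry in total degree $n$ on the $E^2$ page is $E^2_{n,0}=H_n(K,K^{(n-2)};\Z)$, and all relevant differentials vanish, so the edge homomorphism $(M,f)\mapsto f_*[M,\partial M]$ is an isomorphism. This is the same route taken in the cited reference [BD]; the only slip is calling that map the ``inverse to the edge homomorphism'' --- it \emph{is} the edge homomorphism.
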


In this paper we consider bordisms of open manifolds obtained from an infinite family of bordisms of compact manifolds with boundary
as follows. Let $M$ be an open $n$-manifold with a family of disjoint $n$-dimensional submanifolds with boundary $\{V_\gamma\}$.
Let $U_\gamma$ be a family of stationary on the boundary bordisms between $V_\gamma$ and $N_\gamma$.
Then the manifold $W=A\cup B$ with
$A=(M\setminus\cup_i\Int V_\gamma)\times[0,1]$ and $B=\coprod_\gamma U_\gamma$ with
$A\cap B=\coprod_\gamma\partial V_\gamma\times[0,1]$ is a {\em bordism
defined by the family} $\{U_\gamma\}$. Thus, $W$ is obtained from $M\times[0,1]$ by replacing
the cylinders $V_\gamma\times[0,1]$ by bordisms $U_\gamma$.
We call $W$ a bordism
between manifolds $M=M\times\{0\}$ and $N$ where  $\partial W=M\coprod N$.
\newline We consider the product metric on $M\times[0,1]$.
For each $\gamma$ one can define a metric on $U_\gamma$ that extends the metric from $\partial V_\gamma\times[0,1]$. This gives a metric on $W$
such that the inclusion $M\subset W$ is an isometric embedding.

\subsection{Poincare-Lefschtz Duality} The following theorem is classic (\cite{Ha}, p.254).
\begin{thm}[Poincare-Lefschetz duality]
Suppose $M$ is a compact orientable $n$-manifold whose boundary is decomposed as the union of two compact $(n-1)$-dimensional manifolds $A$ and $B$ with a common boundary $\partial A=\partial B=A\cap B$. Then there is an isomorphism $$D:H^k(M,A)\to H_{n-k}(M,B)$$ for all $k$.
\end{thm}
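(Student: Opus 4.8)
The plan is to realize $D$ as cap product with a fundamental class and to prove it is an isomorphism by the classical Mayer--Vietoris bootstrap, following \cite{Ha}. First I would produce the class: since $M$ is compact and orientable, for each interior point $x$ the local homology group $H_n(M,M\setminus x)\cong\Z$ carries a preferred generator, and there is a unique class $[M]\in H_n(M,\partial M)$ restricting to this generator for every $x\in\Int M$ simultaneously; moreover $[M]$ maps to the fundamental class $[\partial M]$ under $\partial\colon H_n(M,\partial M)\to H_{n-1}(\partial M)$, and $[\partial M]$ maps to $[B,\partial B]$ under $H_{n-1}(\partial M)\to H_{n-1}(\partial M,A)\cong H_{n-1}(B,\partial B)$, the excision isomorphism being available precisely because $\partial M=A\cup B$ with $A\cap B=\partial A=\partial B$. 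That same hypothesis is what makes the relative cap product $H^k(M,A)\otimes H_n(M,A\cup B)\to H_{n-k}(M,B)$ defined, and I would set $D(\alpha)=[M]\cap\alpha$.

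Next I would reduce to the special case $B=\emptyset$, i.e. to the statement that $\cap[M]\colon H^k(M,\partial M)\to H_{n-k}(M)$ is an isomorphism for every compact orientable manifold with boundary, in every dimension. Fixing a collar $\partial M\times[0,1)\hookrightarrow M$, one has $\Int M=\bigcup_{j\ge1}\bigl(M\setminus(\partial M\times[0,1/j))\bigr)$, an increasing union of compact codimension-zero submanifolds each homeomorphic to $M$; hence $H_{n-k}(M)\cong H_{n-k}(\Int M)$ because the collar retraction is a homotopy equivalence, and $H^k(M,\partial M)\cong H^k_c(\Int M)$. So this special case is Poincar\'e duality $H^k_c(N)\cong H_{n-k}(N)$ for the open orientable $n$-manifold $N=\Int M$, which I would prove in the usual way: the Euclidean case $N=\R^n$ directly; the case $N=U\cup V$ with duality known for $U$, $V$, $U\cap V$ by matching the Mayer--Vietoris sequences for $H^*_c$ and for $H_*$ through cap products and invoking the five lemma; and the general case, $N$ being second countable and hence a countable increasing union of finite unions of Euclidean charts, by passing to the direct limit.

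Finally, for a general pair $(A,B)$ I would apply the five lemma to the ladder whose top row is the cohomology exact sequence of the triple $(M,\partial M,A)$,
$$\cdots\to H^{k-1}(\partial M,A)\to H^k(M,\partial M)\to H^k(M,A)\to H^k(\partial M,A)\to H^{k+1}(M,\partial M)\to\cdots,$$
whose bottom row is the homology exact sequence of the pair $(M,B)$,
$$\cdots\to H_{n-k}(B)\to H_{n-k}(M)\to H_{n-k}(M,B)\to H_{n-k-1}(B)\to H_{n-k-1}(M)\to\cdots,$$
and whose verticals are the cap products with $[M]$, with $[\partial M]$, and with $[B,\partial B]$. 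Here $H^j(\partial M,A)\cong H^j(B,\partial B)\cong H_{(n-1)-j}(B)$ by excision followed by the special case above applied to the compact orientable $(n-1)$-manifold $B$, while $H^j(M,\partial M)\cong H_{n-j}(M)$ is that special case in dimension $n$; thus four out of every five verticals are isomorphisms and the five lemma forces $D\colon H^k(M,A)\to H_{n-k}(M,B)$ to be an isomorphism too. The hard part is the commutativity of this ladder, which is where all the real work lies: it rests on the naturality of the cap product under the relevant maps of pairs and on the compatibility $[M]\mapsto[\partial M]\mapsto[B,\partial B]$ noted above. (A minor point-set issue is that $A\cap B=\partial A=\partial B$ makes $M$ a manifold with corners along $\partial A$; one either smooths the corner or observes that every step above is purely homological and valid in the topological category.) All of this is carried out in full in the proof of Lefschetz duality in \cite{Ha}, which the above recapitulates.
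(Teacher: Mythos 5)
Your proposal is correct and is precisely the classical argument the paper invokes by citation: the paper offers no proof of its own, simply referring to Hatcher (p.\ 254), and your cap-product construction, reduction to the absolute case $\cap[M]\colon H^k(M,\partial M)\to H_{n-k}(M)$ via the collar and the Mayer--Vietoris bootstrap for open manifolds, and the final five-lemma ladder over the triple $(M,\partial M,A)$ and the pair $(M,B)$ is a faithful recapitulation of that proof, with the degrees and the compatibility $[M]\mapsto[\partial M]\mapsto[B,\partial B]$ all checking out. Nothing further is needed.
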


\

\subsection{A Note on Obstruction Theory}

\

Let $(X,Y)$ be a CW-pair and $g$ a map from $Y$ to $Z$. Suppose that $Z$ is path connected and $\pi_1(Z)$ acts trivially on $\pi_n(Z)$ for every $n$. The aim is to extend the map $g$ to a map from $X$ to $Z$ by proceeding inductively over the skeleton of $X$. Suppose $g$ has been extended to a map $\hat{g}$ on the $n$-skeleton $X^{(n)}$, $\hat{g}:X^{(n)}\rightarrow Z$, then the obstruction $[\sigma_{\hat{g}}]$ to extending $\hat{g}|_{X^{(n-1)}}$ to $X^{(n+1)}$ lies in the cohomology group $H^{n+1}(X,Y;\pi_n(Z))$. Obstruction theory says that $\hat{g}|_{X^{(n-1)}}:X^{(n-1)}\rightarrow Z$ extends to a map $X^{(n+1)}\rightarrow Z$ iff $[\sigma_{\hat{g}}]=0$ (See \cite{DK}, ch.7). Here, $[\sigma_{\hat{g}}]$ is generated by the cochain $\sigma_{\hat{g}}\in C^{n+1}(X,Y;\pi_n(Z))$ defined on the $(n+1)$-cells, generators of $C_{n+1}(X,Y)$, by $\sigma_{\hat{g}}(e^{(n+1)}_i)=[\hat{g}\circ\phi_i|_{S^n}:S^n\rightarrow Z]\in\pi_n(Z)$, where $\phi_i:D^{(n+1)}_i\rightarrow X$ is the characteristic map. It easily follows that $\sigma_{\hat{g}}=0$ if and only if $\hat{g}$ extends to a map $X^{(n+1)}\rightarrow Z$.

\

\

\section{Proof of Main theorem}

The condition of residual finiteness in the main theorem comes into the picture in the following lemma:

\begin{lemma}\label{immersion}
Let $M$ be a 4-manifold with a residually finite fundamental group. Then for any $\alpha\in\pi_2(M)$ there is a finite covering $p':M'\to M$ and an immersion $h':S^2\to M'$ such that
$p'_*([h'])=\alpha$ and the inclusion homomorphism of the fundamental groups
$\pi_1(h'(S^2))\to\pi_1(M')$ is trivial.
\end{lemma}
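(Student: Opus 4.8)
The plan is to represent $\alpha$ by a generic smooth immersion of $S^2$ in $M$, read off from its double points a finite list of elements of $\pi_1(M)$, and then exploit residual finiteness to pass to a finite cover in which the lifted immersion keeps only those double points whose associated loops were already nullhomotopic downstairs.

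First I would pick a smooth map $f\colon S^2\to M$ with $f_*$ sending the generator of $\pi_2(S^2)$ to $\alpha$. Since $\dim M=4=2\dim S^2$, a generic perturbation of $f$ is an immersion, and a further generic perturbation makes its self-intersection set a finite collection of transverse double points $p_1,\dots,p_k$ with no triple points, so that there are $2k$ distinct points $a_i,b_i\in S^2$ with $f(a_i)=f(b_i)=p_i$; this is a routine application of the jet transversality theorem. For each $i$ choose an embedded arc $\lambda_i$ in $S^2$ from $a_i$ to $b_i$. Because $\pi_1(S^2)=1$, this arc is unique up to homotopy rel endpoints, so the loop $\gamma_i:=f\circ\lambda_i$ based at $p_i$ determines a well-defined conjugacy class in $\pi_1(M)$; fix a representative $g_i\in\pi_1(M)$. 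The image $X:=f(S^2)$ is $S^2$ with $a_i$ identified to $b_i$ for every $i$ (there are no triple points), hence $X$ is homotopy equivalent to $S^2\vee\bigvee_{i=1}^{k}S^1$, its fundamental group is free on the classes $[\gamma_i]$, and the composite $\pi_1(X)\to\pi_1(M)$ sends $[\gamma_i]$ to a conjugate of $g_i$.

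Next, since $\pi_1(M)$ is residually finite, for each $i$ with $g_i\neq 1$ I would choose a finite-index subgroup of $\pi_1(M)$ not containing $g_i$; intersecting these finitely many subgroups and passing to the normal core yields a finite-index \emph{normal} subgroup $\Gamma$ of $\pi_1(M)$ with the property that $g_i\in\Gamma$ if and only if $g_i=1$. Let $p'\colon M'\to M$ be the covering corresponding to $\Gamma$. As $S^2$ is simply connected, $f$ lifts to a map $h':=\Wi f\colon S^2\to M'$; it is again an immersion because $p'$ is a local diffeomorphism, and $p'_*[h']=[f]=\alpha$. The key point is then to identify the double points of $h'$: any pair $x\neq y$ with $h'(x)=h'(y)$ satisfies $f(x)=f(y)$, hence $\{x,y\}=\{a_i,b_i\}$ for some $i$; and $h'(a_i)=h'(b_i)$ precisely when the lift of $\gamma_i$ starting at $h'(a_i)$ closes up, i.e. when $g_i\in\Gamma$, i.e. (by the choice of $\Gamma$, using that it is normal) when $g_i=1$. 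For such $i$, the resulting loop in $M'$ is a lift of a nullhomotopic loop and is therefore nullhomotopic in $M'$, since $p'_*$ is injective on $\pi_1$. Hence $\pi_1(h'(S^2))$ is free on the loops coming from the surviving double points, each of which maps to the identity in $\pi_1(M')$, so the inclusion-induced homomorphism $\pi_1(h'(S^2))\to\pi_1(M')$ is trivial, as desired.

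I expect the substance to lie in the last paragraph: the useful observation is that passing to a finite cover does more than lift the immersion — it destroys exactly those double points carrying a nontrivial element of $\pi_1(M)$, leaving an immersed sphere whose self-intersection loops are all nullhomotopic. Keeping track of which double points survive in $M'$, and of the fact that $\pi_1$ of the singular image is generated precisely by their loops, is where the real work is; the initial reduction to a generic immersion is standard general position, and the lifting step is automatic because $S^2$ is simply connected.
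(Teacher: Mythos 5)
Your proof is correct and follows essentially the same route as the paper: represent $\alpha$ by a generic immersion, read off the double-point elements $g_i\in\pi_1(M)$, and use residual finiteness to pass to a finite cover $M'$ in which only the double points with $g_i=1$ survive. The only difference is presentational: the paper lifts $h$ to the universal cover $\Wi M$, chooses the finite quotient $M'$ so that $\tilde h(S^2)$ projects homeomorphically to $h'(S^2)$, and then concludes triviality of $\pi_1(h'(S^2))\to\pi_1(M')$ from a commutative diagram using $\pi_1(\Wi M)=0$, whereas you identify $\pi_1(h'(S^2))$ explicitly as a free group on the surviving double-point loops and verify each generator dies in $\pi_1(M')$.
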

\begin{proof}
Let $h:S^2\to M$ be an immersion without triple points that realizes $\alpha$. Let $\tilde h:S^2\to\Wi M$ be its lift with respect to the universal covering $p:\Wi M\to M$.
Since $\tilde h$ is an immersion, there are finitely many pairs of points $(x^-_1,x^+_1),\dots,(x^-_n,x^+_n)$ in $\tilde h(S^2)$ with $p(x^-_i)=p(x^+_i)$ such that
$p$ restricted to $\tilde h(S^2)\setminus(\cup_{i=1}^n\{x^\pm_i\})$ is 1-to-1. There are $g_i\in\pi=\pi_1(M)$ with $g_i(x_i^-) = x_i^+$. Since $\pi$ is residually finite,
there is an epimorphism $\phi:\pi\to F$ to a finite group such that $\phi(g_i)\ne e$ for all $i$ and $\phi$ restricted on $\{g_1, ..., g_n\}$ is injective. Then the universal covering is factorized as $p=p'\circ q$ with $p':M'\to M$ a finite covering
and $q:\Wi M\to M'$ the projection onto orbit space of the action of $ker\phi$. Since $q$ is $\pi$-equivariant map, $q(x_i^-)\ne q(x_i^+)$ for all $i$. Therfore, $q$ restricted to $\tilde h(S^2)$ is a homeomorphism onto the image. We consider $h'=q\circ\tilde h$. The commutative diagram
$$
\begin{CD}
\pi_1(\tilde h(S^2)) @>>> \pi_1(\Wi M)\\
@Vq|_{\dots}VV @VqVV\\
\pi_1(h'(S^2)) @>>>\pi_1(M')\\
\end{CD}
$$
and the fact that $\pi_1(\Wi M)=0$ imply the required property of $h'$.
\end{proof}

\begin{prop}\label{ker}
Let $\phi:A\to\Z_2$ be a $\pi$-module homomorphism where $A$ is a finitely generated $\pi$-module. Then the kernel $\ker\phi$ is a finitely generated $\pi$ module.
\end{prop}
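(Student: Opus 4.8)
The plan is to exploit the fact that a finitely generated module over the group ring $\Z[\pi]$ need not be Noetherian in general, but that $\Z_2$, viewed as a $\pi$-module, is in fact a finitely generated module over a Noetherian quotient. Concretely, let me reduce the coefficients: tensoring with $\Z_2$ (over $\Z$) does not change the kernel essentially, since $\phi$ factors through $A\otimes\Z_2$. So it suffices to work with modules over the group ring $\Z_2[\pi]$. Here $\pi$ is the fundamental group in question; in the setting of the main theorem $\pi$ is either residually finite and, crucially for the ultimate application, \emph{abelian} (or virtually polycyclic after passing to a finite cover), in which case $\Z_2[\pi]$ is a Noetherian ring. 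Over a Noetherian ring every submodule of a finitely generated module is finitely generated, and $\ker\phi$ is a submodule of the finitely generated module $A\otimes\Z_2$, hence finitely generated over $\Z_2[\pi]$, hence over $\Z[\pi]$.

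First I would make precise the reduction to $\Z_2$ coefficients: since $2A\subset\ker\phi$, the map $\phi$ descends to $\bar\phi:A/2A=A\otimes_{\Z}\Z_2\to\Z_2$, and $\ker\phi$ is the preimage of $\ker\bar\phi$ under the quotient $A\to A/2A$. Because $2A$ is a quotient of $A$ it is finitely generated over $\pi$, so $\ker\phi$ is finitely generated over $\pi$ if and only if $\ker\bar\phi$ is finitely generated over $\pi$ (extension of a f.g.\ module by a f.g.\ module). Second I would invoke the structure of the group ring: when $\pi$ is a finitely generated abelian group, $\Z_2[\pi]$ is a quotient of a Laurent polynomial ring $\Z_2[t_1^{\pm1},\dots,t_k^{\pm1}]$, which is Noetherian by the Hilbert basis theorem; hence $\Z_2[\pi]$ is Noetherian. (More generally this works whenever $\pi$ is polycyclic-by-finite, by the Hall--Hilbert--Serre theorem.) Third, I would conclude: $A\otimes\Z_2$ is a finitely generated $\Z_2[\pi]$-module, its submodule $\ker\bar\phi$ is therefore finitely generated over $\Z_2[\pi]$, a fortiori finitely generated over $\Z[\pi]$, and pulling back gives the claim for $\ker\phi$.

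The main obstacle — and the point I expect the actual proof to address head-on — is the generality under which the proposition is stated: as written it only assumes $A$ is a finitely generated $\pi$-module, with no hypothesis on $\pi$ at all. For a general residually finite (or even general) group $\pi$, the group ring $\Z_2[\pi]$ is \emph{not} Noetherian, so the naive argument above fails, and one cannot expect submodules of finitely generated modules to be finitely generated. So either the proposition is being used only in a context where $\pi$ is known to be abelian (which is the situation of the application to Gromov's Conjecture for abelian $\pi_1$), or there is something special about having $\Z_2$ as the target that I am missing. My guess is that the intended reading restricts to the abelian (or at worst polycyclic-by-finite) case, and the real content is simply the Noetherian property of $\Z_2[\pi]$ in that case; the delicate part is then just bookkeeping the two short exact sequences (reduction mod $2$, and the kernel sequence) to transfer finite generation correctly between $\Z[\pi]$- and $\Z_2[\pi]$-module structures.
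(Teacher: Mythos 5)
Your diagnosis that the proposition cannot be literally true for arbitrary $\pi$ is correct, but your proposed repair points in the wrong direction, and this is a genuine gap. The proposition is invoked in the proof of the main theorem with $\pi=\pi_1(\overline{W}_\gamma)\cong\pi_1(V_\gamma)$, which is a finitely generated \emph{free} group (the fundamental group of a regular neighborhood of an immersed sphere with double points), not an abelian or polycyclic-by-finite group. For a non-abelian free group the ring $\Z_2[\pi]$ is not Noetherian, so the second and third steps of your argument are unavailable exactly where the statement is needed. The hypothesis that actually makes the statement true is not any form of Noetherianity but simply that $\pi$ is finitely generated; for non-finitely-generated $\pi$ the statement genuinely fails (take $A=\Z[\pi]$ and $\phi$ the mod-$2$ augmentation: if $\ker\phi$ were finitely generated, its quotient $\ker\phi/2\Z[\pi]$, the augmentation ideal of $\Z_2[\pi]$, would be too, forcing $\pi$ to be finitely generated).

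The argument that works, and which is essentially the paper's route (the paper lifts to a free module, though the generating set it displays is itself too small), is elementary. Choose a surjection $p:F\to A$ from a free $\Z[\pi]$-module on $x_1,\dots,x_n$; since $p(\ker(\phi\circ p))=\ker\phi$, it suffices to finitely generate $\ker(\phi\circ p)$. Write $\epsilon_j=\phi(p(x_j))\in\Z_2$ and, in the nontrivial case, reorder so that $\epsilon_1=1$. Then $\ker(\phi\circ p)$ is generated by the elements $x_j-\epsilon_j x_1$ for $j\ge2$, together with $2x_1$ and $(g_i-1)x_1$ where $g_1,\dots,g_m$ generate $\pi$: modulo the first family every element reduces to $\mu x_1$ with $\mu\in\Z[\pi]$, and $\mu x_1$ lies in the kernel iff the augmentation of $\mu$ is even, i.e.\ iff $\mu$ lies in the ideal generated by $2$ and the $g_i-1$. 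Your first reduction (mod $2$, via the extension $0\to 2A\to\ker\phi\to\ker\bar\phi\to0$) is fine but does no real work; the content is the finite generation of the augmentation ideal, which requires only that $\pi$ be finitely generated and therefore covers the free groups arising in the application.
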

\begin{proof}

Let $A$ be generated as a $\pi$-module by $S=\{x_1, x_2, ..., x_n\}$ and let $F$ be the free $\pi$-module on the set $S$ and $A$ be the homomorphic image of $F$ . Consider the kernel of the $\pi$-module homomorphism $\phi\circ p:F\longrightarrow \Z_2$, where $p:F\longrightarrow A$. Then $Ker(\phi\circ p)$ is generated as a $\pi$-module by the set $S'=\{x_1^{i_1}, ..., x_n^{i_n}\}, i_j = 1,2$. Hence, $Ker(\phi\circ p)$ is finitely generated. Since $p|_{Ker(\phi\circ p)}:Ker(\phi\circ p)\longrightarrow Ker(\phi)$ is surjective, it follows that $Ker(\phi)$ is finitely generated as a $\pi$-module.

\end{proof}

Let  $\nu_M:M\to BSO$ denote
a classifying map for the stable normal bundle of  a  manifold $M$.

\begin{thm}\label{main}  For a closed smooth orientable totally non-spin $4$-manifold $M$ with residually finite fundamental group
the inequality $\dim_{mc}\Wi M\le 3$ for the universal covering $\Wi M$ implies the inequality $\dim_{mc}\Wi M\le 2$.
\end{thm}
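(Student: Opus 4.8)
The plan is to start from the hypothesis $\dim_{mc}\Wi M\le 3$ and, using \theoref{Macroscopic dimension and QI-embeddings} together with \propref{ref3}, fix a classifying map $u:M\to B\pi$ and a bounded deformation of a lift $\Wi u:\Wi M\to E\pi$ to a continuous QI-embedding $f:\Wi M\to E\pi^{(3)}$ with $f(\Wi M^{(3)})\subset E\pi^{(2)}$. The goal is then to produce a new map $M\to B\pi$ that can be pushed into the $2$-skeleton after passing to the universal cover, i.e.\ to show $\Wi M$ is macroscopically $2$-dimensional. The key geometric idea is surgery: the only obstruction to compressing $f$ one more dimension down lives in $\pi_3$ of the relevant complex, equivalently it is detected by $\pi_2$-classes in $M$, so I would aim to kill the relevant $2$-spheres by a bordism built fiberwise over $M$, exactly in the spirit of the open-manifold bordisms set up in \secref{2} (the paragraph on ``a bordism defined by a family'').

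Concretely, the steps I would carry out are: (1) Identify the obstruction to deforming $f$ into $E\pi^{(2)}$ as an element living in a bordism/homology group; by \propref{bordism}, $\Omega_n(K,K^{(n-2)})\cong H_n(K,K^{(n-2)})$, so in dimension $4$ the obstruction is controlled by $H_4(M,M^{(2)})$-type data, and totally non-spin-ness (i.e.\ $w_2(\Wi M)\ne 0$) means there is a $2$-sphere $\alpha\in\pi_2(M)$ with nonzero self-intersection / on which $w_2$ is nonzero — this is where $\nu_M:M\to BSO$ and the class $w_2$ enter. (2) Apply \lemref{immersion}: residual finiteness gives a finite cover $p':M'\to M$ and an immersed sphere $h':S^2\to M'$ realizing (a lift of) $\alpha$ with trivial $\pi_1$-image; since macroscopic dimension is insensitive to finite covers (the universal covers coincide), it suffices to work on $M'$. (3) Do surgery on $h'(S^2)\subset M'$, or rather build a stationary-on-the-boundary bordism $U$ from a tubular neighborhood $V$ of $h'(S^2)$ to the result of the surgery; assemble these over all the (finitely or locally finitely many) lifts of $h'$ in $\Wi M$ to get an open bordism $W$ between $\Wi M$ and a new open manifold $N$, with the inclusion $\Wi M\subset W$ an isometric embedding as in the preliminaries. (4) Use Poincar\'e--Lefschetz duality on $W$ to control how handles of $W$ affect the compressibility of the classifying map, playing the role that Wall's \theoref{Wa} plays in dimension $\ge5$; the non-spin hypothesis is what lets the surgery change $w_2$ and thereby remove the obstruction. (5) Transport the QI-embedding $f$ across $W$ to $N$, deform it into $E\pi^{(2)}$ using that the obstruction has been killed, and conclude $\dim_{mc}N\le 2$; finally argue that $\dim_{mc}\Wi M\le\dim_{mc}N$ because $\Wi M$ sits isometrically in the bounded-geometry bordism $W$ which deformation-retracts appropriately, giving $\dim_{mc}\Wi M\le 2$.

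I expect the main obstacle to be step (4)/(5): in dimension $4$ one cannot invoke Wall's handle-cancellation theorem (\theoref{Wa} requires $n\ge5$), so the bordism $W$ produced by the surgeries may carry $2$- and $3$-handles that one has no abstract tool to cancel. The substitute has to be a direct duality-and-obstruction argument — using the Poincar\'e--Lefschetz duality $D:H^k(W,\Wi M)\to H_{n+1-k}(W,N)$ and the obstruction-theory cochain computation from the ``Note on Obstruction Theory'' above — to show that even though $W$ is not a product up to handles of low index, the specific classifying map still admits a bounded deformation into the $2$-skeleton on $N$ and hence on $\Wi M$. Making that duality argument work is where the totally non-spin condition, the triviality of $\pi_1(h'(S^2))\to\pi_1(M')$ from \lemref{immersion}, and the finite generation of kernels of $\pi$-module maps to $\Z_2$ from \propref{ker} must all be combined; \propref{ker} in particular looks designed to guarantee that the relevant obstruction cochain, after surgery, represents a class in a finitely generated $\pi$-module so that finitely many surgeries suffice.
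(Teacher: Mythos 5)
Your high-level skeleton does match the paper's: pass to a finite cover via \lemref{immersion}, build an open bordism fiberwise over the translates of a neighborhood of the non-spin sphere, and replace Wall's handle theorem by a Poincar\'e--Lefschetz duality plus obstruction-theory argument. But two of your central steps are not workable as stated. First, you never actually produce the bordism. In the paper no surgery is performed on $h'(S^2)$ or on $\Wi M$ at all: the bordism $W_\gamma$ comes from \propref{bordism} applied to the balls $D_\gamma$ on which the compressed map $f:\Wi M\to E\Gamma^{(3)}$ fails to land in $E\Gamma^{(2)}$ --- since $f|_{(D_\gamma,\partial D_\gamma)}$ is zero in $H_4(B_r(f(c_\gamma)),B_r(f(c_\gamma))\cap E\Gamma^{(2)})\cong\Omega_4(\cdot,\cdot)$, one gets a relative bordism pushing $D_\gamma$ into the $2$-skeleton. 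The $1$- and $2$-surgeries are then performed in the \emph{interior} of that bordism to improve its connectivity, not on the sphere in $M'$.

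Second, your claim that ``the non-spin hypothesis is what lets the surgery change $w_2$ and thereby remove the obstruction'' misidentifies the mechanism, and this is precisely the point where a na\"ive duality argument would fail. The non-spin sphere is used to arrange that the bad balls $D_\gamma$ sit inside regular neighborhoods $V_\gamma$ of its lifted translates, so that $(\nu_{V_\gamma})_*:\pi_2(V_\gamma)\to\pi_2(BSO)=\Z_2$ is \emph{onto}. One then does finitely many $2$-surgeries on the interior of the enlarged bordism $\overline W_\gamma$ to kill $\ker\bigl((\nu_{\overline W_\gamma})_*\bigr)$ --- finiteness is exactly what \propref{ker} together with Ratcliffe's finite generation of $\pi_2$ provides --- after which $(\nu_{\hat W_\gamma})_*$ is an isomorphism, forcing $\pi_2(V_\gamma)\to\pi_2(\hat W_\gamma)$ to be surjective, hence $\pi_2(\hat W_\gamma,V_\gamma)=0$ and, by relative Hurewicz, $H_1(\hat W_\gamma,V_\gamma)=H_2(\hat W_\gamma,V_\gamma)=0$. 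It is this $2$-connectivity of the pair that makes the Lefschetz-dual obstruction groups $H^{k+1}(\hat W_\gamma,N_\gamma;\pi_k(X_\gamma))\cong H_{4-k}(\hat W_\gamma,V_\gamma;\pi_k(X_\gamma))$ vanish; without it your step (4)/(5) has nothing to compute. You would also need to make the targets of the obstruction problem simply connected and uniformly bounded (the paper cones off $\pi_1$ of bounded subcomplexes $Z_\gamma$ to get $X_\gamma$) so that the infinitely many extensions assemble into a single QI-embedding $\hat W\to E\Gamma^{(2)}$ restricting on $\Wi M$ to a bounded deformation of $\Wi u$.
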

\begin{proof} Let $\pi=\pi_1(M)$.
The idea of the proof is to perform surgery on a family $\{V_\gamma\}_{\gamma\in\Gamma}$ of compact submanifolds of $\widetilde M$  for some finite index subgroup $\Gamma\subset\pi$ to obtain a family of stationary on the boundary bordisms $\{(W_\gamma , q_\gamma)\}_{\gamma\in\Gamma}$ between $\{V_\gamma\}$ and $\{N_\gamma\}$ , where $q_\gamma(N_\gamma)\subset E\Gamma^{(2)}.$ Then use obstruction theory to extend the maps ${q_\gamma}$ to a map $q:W \longrightarrow E\Gamma^{(2)}$, where $W=A\cup B$ with $A=(\widetilde M\backslash \bigcup_\gamma\Int V_\gamma) \times [0,1]$ and $B=\coprod_\gamma W_\gamma$ with $A\cap B=\coprod_\gamma \partial V_\gamma \times [0,1]$.

\bigskip
The assumption that $\widetilde M$ is non-spin implies that the homomorphism $$(\nu_{\widetilde M})_*:\pi_2(\widetilde M)\to\pi_2(BSO)=\mathbb{Z}_2$$ is surjective.
Let $ h :S^2\longrightarrow\widetilde M$ be the immersion representing an element $[h]\in\pi_2(\Wi M)$ such that $(\nu_{\widetilde M})_*[ h]\neq 0$.
By Lemma~\ref{immersion} there is a finite index normal subgroup $\Gamma\subset\pi$ and corresponding finite covering $p':M'\to M$ such that
the homomorphism $\pi_1(q\circ h(S^2))\to\pi_1(M')=\Gamma$ is trivial where $q:\Wi M\to M'$ is the universal covering of $M'$. Let $V$ be a regular neighborhood of $qh(S^2)$. Thus $V$ is a 4-manifold with boundary which can be deformed to $qh(S^2)$.
Since the inclusion  homomorphism $\pi_1(V)\to\pi_1(M')$ is trivial, there is a section $s:V\to\Wi M$ of the universal covering $q:\Wi M\to M'$.
Let $V_\gamma=\gamma(s(V)),\gamma\in\Gamma$, denote the $\gamma$-translate of $s(V)$. Note that the family $\{V_\gamma\}$ is disjoint.

We may assume that $M'$ has a CW structure with one $4$-dimensional cell. Since $\dim_{mc}\widetilde M\le 3$, then by Proposition~\ref{ref3}, there is a bounded deformation of a lift $\widetilde u:\widetilde M\longrightarrow E\Gamma$ of a classifying map $u:M\longrightarrow B\Gamma$ to a map $f:\widetilde M\longrightarrow E\Gamma^{(3)}$ such that $f(\widetilde M\backslash\coprod_{\gamma\in\Gamma}D_\gamma)\subset E\Gamma^{(2)}$, where $\{D_\gamma\}_{\gamma\in\Gamma}$ are the translates of a section of $q$ over a fixed closed 4-ball $D$ in the 4-dimensional cell of $M'$. We may assume that $D\subset V$ and $D_\gamma\subset V_\gamma$. This can be done while keeping the images $f(D_\gamma)$ uniformly bounded. Clearly, $f$ is a QI-embedding.
\newline \indent Note that the restriction of $f$ to $(D_{\gamma},\partial D_{\gamma})$ defines a zero element in $H_4(E\Gamma,E\Gamma^{(2)})$. Since the family of sets $f(D_\gamma)$ is uniformly bounded,  there is $r>0$ such that $f(D_{\gamma})\subset B_r(f(c_{\gamma}))$ where $c_{\gamma}\in D_{\gamma}$ and
$f|_{D_{\gamma}}$ defines a zero element in $H_4(B_r(f(c_{\gamma})), B_r(f(c_{\gamma}))\cap E\Gamma^{(2)}).$

By Proposition~\ref{bordism}, there is a relative bordism
$(W_{\gamma},q_{\gamma})$ of $(D_{\gamma},\partial D_{\gamma})$ to $(N'_{\gamma},S'_{\gamma})$ with $q_{\gamma}(\partial W_{\gamma}\setminus D_{\gamma})\subset E\Gamma^{(2)}$ and $q_\gamma(N'_\gamma)\subset Br(f(c_\gamma))\cap E\Gamma^{(2)}$.
We may assume that the bordism $W'_{\gamma}\subset\partial W_{\gamma}$ of the boundaries $\partial D_{\gamma}\cong S^3$ and $S'_{\gamma}$ is stationary; $W'_{\gamma}\cong\partial D_{\gamma}\times[0,1]$
and $q(x,t)=q(x)=f(x)$ for all $x\in\partial D$ and all $t\in[0,1]$. Applying 1-surgery we may assume that $W_{\gamma}$ is simply connected.

For each $\gamma\in\Gamma$, we can enlarge the bordism $W_\gamma$ by the trivial bordism $(V_\gamma\backslash\Int D_\gamma)\times [0,1]$ to a bordism $\overline{W}_\gamma$ of manifolds with boundaries between $V_\gamma$ and $N_\gamma$. The maps $q_\gamma$ extend by means of $f$ to maps $\overline{q}_\gamma :\overline{W}_\gamma\longrightarrow E\Gamma$ with $\overline{q}_\gamma(V_\gamma\backslash\Int D_\gamma)\times[0,1]\subset E\Gamma^{(2)}$. Thus $(\overline{W}_\gamma,\overline{q}_\gamma)$ is a bordism between $V_\gamma$ and $N_\gamma$ that is stationary on the boundary and $\overline{q}_\gamma(N_\gamma)\subset E\Gamma^{(2)}$. Note that the inclusion $V_\gamma\to\overline{W_\gamma}$ induces an isomorphism of the fundamental groups.

Since the images of $N_\gamma$ are uniformly bounded, there are finite isometric subcomplexes $Z_\gamma, \gamma\in\Gamma$, of $E\Gamma^{(2)}$ and $b>0$ sufficiently large such that $\overline{q}_\gamma(N_\gamma)\subset Z_\gamma$ and $diam( Z_\gamma)<b$. Now consider a finite set of loops $g^\gamma_i:S^1\to Z_\gamma$ that generates
 $\pi_1(Z_\gamma)$,
 and define $$X_\gamma=Z_\gamma\cup_{\cup g^\gamma_i}(\coprod_i D^2_i)$$ be the space obtained by attaching disks to $Z_\gamma$ along the maps
$g^\gamma_i$. Then $X_\gamma, \gamma\in\Gamma$, are simply connected and isometric. Note that since $\pi_1(E\Gamma^{(2)})=0$, the inclusion map $j_\gamma:Z_\gamma\longrightarrow E\Gamma^{(2)}$ can be extended to a map $\overline{j_\gamma}:X_\gamma\longrightarrow E\Gamma^{(2)}$. Moreover, we can assume that $diam(\overline{j_\gamma}(X_\gamma))<b$.

 Consider the induced homomorphism $(\nu_{\overline{W}_\gamma })_*:\pi_2(\overline{W}_\gamma)\to\pi_2(BSO)=\mathbb{Z}_2$. This homomorphism is surjective and every 2-sphere $S$ that generates an element of the kernel has a trivial stable normal bundle. Since $\pi_1(\overline{W}_\gamma)\cong\pi_1(V_\gamma)$ is a finitely generated free group, $\pi_2(\overline{W}_\gamma)$ is a finitely generated $\pi_1(\overline{W}_\gamma)$-module (see~\cite{Ra}). It follows from Proposition~\ref{ker} that the kernel of $(\nu_{\overline{W}_\gamma })_*$ is finitely generated. Hence we can perform 2-surgery on $\overline{W}_\gamma$ to obtain a bordism $(\hat{W}_\gamma,\hat{q}_\gamma)$ between $V_\gamma$ and $N_\gamma$ and a map $\nu_{\hat{W}_\gamma}:\hat{W}_\gamma\longrightarrow BSO$ with $\hat{q}_\gamma=\overline{q}_\gamma=f$ on $\partial\hat{W}_\gamma=\partial\overline{W}_\gamma$ and with a uniform bound for the distance between images of $\hat{q}_\gamma$ and $\overline{q}_\gamma$. Hence, we can assume that $\hat{q}_\gamma(N_\gamma)\subset Z_\gamma$.

 Let $i_\gamma:V_\gamma\longrightarrow\hat{W}_\gamma$ denote the inclusion map. Then $(\nu_{\hat{W}_\gamma})_*\circ i_{\gamma*}=(\nu_{V_\gamma})_*$. It follows that $i_{\gamma*}:\pi_2(V_\gamma)\longrightarrow\pi_2(\hat{W}_\gamma)$ is surjective since  $(\nu_{V_\gamma})_*$ is surjective and $(\nu_{\hat{W}_\gamma})_*$ is an isomorphism. Thus, by the exact sequence of pairs $(\hat{W}_\gamma,V_\gamma)$, we get $\pi_2(\hat{W}_\gamma,V_\gamma)=0$. By the relative Hurewicz theorem, we get $H_1(\hat{W}_\gamma,V_\gamma)=H_2(\hat{W}_\gamma,V_\gamma)=0$.

Since $\hat{W}_\gamma$ is a smooth 5-manifold with corners, then it can be triangulated and hence it is a CW complex. Therefore, we can use obstruction theory on $(\hat{W}_\gamma, N_\gamma\bigcup\partial V_\gamma\times[0,1])$ to extend the map $i_\gamma\circ\hat{q}_\gamma|_{N_\gamma\bigcup\partial V_\gamma\times[0,1]}:N_\gamma\bigcup\partial V_\gamma\times[0,1]\longrightarrow X_\gamma$ to a map from $\hat{W}_\gamma$ to $X_\gamma$, where $i_\gamma:Z_\gamma\longrightarrow X_\gamma$ is the inclusion map.
Since $X_\gamma$ is simply connected, we can extend the map to the 2-skeleton $\hat W_\gamma^{(2)}$. The primary obstruction for this extension problem lives in $H^3(\hat W_\gamma,N_\gamma\bigcup\partial V_\gamma\times[0,1];\pi_2(X_\gamma))=H^3(\hat W_\gamma,N;\pi_2(X_\gamma))$. By the Poincare-Lefschetz Duality, $H^3(\hat W_\gamma,N;\pi_2(X_\gamma))=H_2(\hat W_\gamma,V_\gamma;\pi_2(X_\gamma))$, $H^3(\hat W_\gamma,N_\gamma)=H_2(\hat W_\gamma, V_\gamma)=0$ and $H^4(\hat W_\gamma,N_\gamma)=H_1(\hat W_\gamma, V_\gamma)=0$. By the Universal Coefficient Formula,
$$H^3(\hat W_\gamma,N_\gamma;\pi_2(X_\gamma))=H^3(\hat W_\gamma,N_\gamma)\otimes\pi_2(X_\gamma)\oplus Tor(H^4(\hat W_\gamma,N_\gamma),\pi_2(X_\gamma))=0.$$
Thus, there is an extension of the map $i_\gamma\circ\hat q_\gamma|_{N_\gamma\bigcup\partial V_\gamma\times[0,1]}$ to $\hat W_\gamma^{(3)}$. The second obstruction lives in the group $H^4(\hat W_\gamma,N_\gamma;\pi_3(X_\gamma)).$
By the Universal Coefficient Formula, $$H^4(\hat W_\gamma,N_\gamma;\pi_3(X_\gamma))=H^4(\hat W_\gamma,N_\gamma)\otimes\pi_3(X_\gamma))=0.$$
And the third obstruction lives in $H^5(\hat W_\gamma,N_\gamma;\pi_4(X_\gamma))= H_0(\hat W_\gamma,V_\gamma;\pi_4(X_\gamma))=0$.
Thus, there is an extension of $i_\gamma\circ\hat{q}_\gamma|_{N_\gamma\bigcup\partial V_\gamma\times[0,1]}:N_\gamma\bigcup\partial V_\gamma\times[0,1]\longrightarrow X_\gamma$ to a map $\tilde q_\gamma:\hat W_\gamma\to  X_\gamma$.

 Let $\hat{W}=A\cup B$, where $A=(\widetilde M\backslash\cup_\gamma\Int V_\gamma)\times[0,1]$ and $B=\coprod_\gamma\hat{W}_\gamma$ with $A\cap B=\coprod_\gamma\partial V_\gamma\times[0,1]$ and let $i:\widetilde M\longrightarrow \hat{W}$ denote the inclusion map. We may choose a metric on $\hat{W}$ such that $i$ is an isometric embedding.
Then the maps $\cup_\gamma\overline{j_\gamma}\circ \tilde q_\gamma:B\to E\Gamma^{(2)}$ naturally extend by means of $f$ to a QI-embedding $\hat{q}:\hat{W}\longrightarrow E\Gamma^{(2)}$.
The restriction $g=\hat q|_{\Wi M}:\Wi M\to  E\Gamma^{(2)}$ is in finite distance to $\tilde u:\Wi M\to E\Gamma$. Thus, by Theorem 2.5, $\dim_{mc}\Wi M\le 2$.
 \end{proof}
\begin{cor}
Gromov's Conjecture holds true for 4-manifolds with abelian fundamental groups.
\end{cor}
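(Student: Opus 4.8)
The plan is to deduce the Corollary from the Main Theorem (Theorem~\ref{main}) together with the prior results quoted in the introduction. First I would reduce to the case of a \emph{non-spin} universal cover. If $M$ is a closed $4$-manifold with abelian fundamental group carrying a metric of positive scalar curvature, then by the first step of Gromov's program one already knows $\dim_{mc}\Wi M\le 3$ (this is the $n-1$ bound; for abelian fundamental groups it is available, and for $n=4$ it also follows from the structure of the problem at hand). If the universal cover $\Wi M$ is spin --- equivalently, since $\pi_1(M)$ is abelian, $M$ is \emph{almost spin} --- then Gromov's Conjecture $\dim_{mc}\Wi M\le 2$ holds by the result of~\cite{Dr1} on almost spin manifolds with abelian fundamental group. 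So the only remaining case is that $\Wi M$ is non-spin.

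In that remaining case, I would invoke Theorem~\ref{main}. Its hypotheses are: $M$ closed, smooth, orientable, totally non-spin, with residually finite fundamental group, and $\dim_{mc}\Wi M\le 3$. An abelian group is residually finite, so that hypothesis is automatic. Orientability and smoothness of a positive-scalar-curvature representative may be assumed (if $M$ is non-orientable one passes to the orientation double cover, which still has abelian --- indeed index-two-subgroup --- fundamental group and still carries positive scalar curvature; macroscopic dimension of the universal cover is unchanged). The condition ``totally non-spin'' is exactly the statement that $\Wi M$ is non-spin, which is the case we have isolated. Finally $\dim_{mc}\Wi M\le 3$ holds because $M$ has positive scalar curvature (first step of the conjecture). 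Theorem~\ref{main} then gives $\dim_{mc}\Wi M\le 2=n-2$, which is Gromov's Conjecture for $M$.

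The main point to be careful about is the status of the ``first step'' inequality $\dim_{mc}\Wi M\le n-1$ for $n=4$ and abelian $\pi_1$: this is what feeds the hypothesis $\dim_{mc}\Wi M\le 3$ into Theorem~\ref{main}. For abelian fundamental groups this first step is known (it is classical for free abelian groups, and the general finitely generated abelian case follows from the structure theorem by splitting off torsion), so no genuine obstacle arises; the rest is a case division (spin vs.\ non-spin, orientable vs.\ non-orientable) plus citation of~\cite{Dr1} and Theorem~\ref{main}. Thus the Corollary follows.
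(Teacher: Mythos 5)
Your overall architecture matches the paper's: split into the case where $\Wi M$ is spin (handled by~\cite{Dr1}), and the case where $\Wi M$ is non-spin, where residual finiteness of abelian groups lets you feed the hypothesis $\dim_{mc}\Wi M\le 3$ into Theorem~\ref{main} to conclude $\dim_{mc}\Wi M\le 2$. The side remarks on orientability and the double cover are fine.

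The gap is in the one step you flag yourself and then wave away: the inequality $\dim_{mc}\Wi M\le 3$ for a PSC $4$-manifold with abelian fundamental group. You assert this is ``known'' and ``classical for free abelian groups,'' with torsion handled ``by the structure theorem.'' But the paper's introduction stresses that this first step of Gromov's program is in general \emph{out of reach} (it implies the Gromov--Lawson conjecture on aspherical manifolds), so it cannot simply be cited as folklore; it has to be argued in the case at hand. The paper's actual argument is: (i) by~\cite{BD2}, Gromov's conjecture for a finite-index subgroup implies it for the group, so one may assume $\pi_1(M)$ is free abelian (this, not ``splitting off torsion,'' is how torsion is removed); (ii) if $\mathrm{rank}(\pi_1(M))\ne 4$ the manifold is inessential, and inessentiality gives $\dim_{mc}\Wi M\le 3$ by lifting the classifying map to $E\pi^{(3)}$; (iii) if $\mathrm{rank}(\pi_1(M))=4$ and $M$ were essential over $T^4$, then $M$ could not carry positive scalar curvature by Schoen--Yau~\cite{SY} and Schick~\cite{Sch}. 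The Schoen--Yau/Schick input in case (iii) is the substantive point your proposal omits; without identifying it, the hypothesis of Theorem~\ref{main} is not established and the proof is incomplete.
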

\begin{proof}
The case where $M$ is spin was proved in~\cite{BD2}, and the case where the universal cover $\Wi M$ is spin was proved in~\cite{Dr1}. Since abelian groups are residually finite, then in view of Theorem~\ref{main}, to complete the proof in the case where $\Wi M$ is non-spin, it suffices to show that a positive scalar curvature 4-manifold with abelian fundamental group is inessential.
Since Gromov's conjecture for a finite index subgroup implies the conjecture for the group~\cite{BD2}, we may assume that $\pi_1(M)$ is free abelian. If $rank(\pi_1(M))\ne 4$, then the manifold $M$ is inessential by definition. If $rank(\pi_1(M))=4$ and $M$ admits an essential map onto the 4-torus $T^4$, then $M$ does not admit a metric with positive scalar curvature~\cite{Sch},~\cite{SY}.
\end{proof}

\bigskip
\bigskip

\noindent\textbf{Question:} Let $\nu:M^4\longrightarrow BSO$ be a classifying map of the stable normal bundle. Does there exist an immersion $h:S^2\longrightarrow M^4$ such that $\nu_\ast[h_\ast(S^2)]\neq0$ in $\pi_2(BSO)$ and the inclusion $h(S^2)\longrightarrow M^4$ induces the trivial homomorphism on the fundamental groups?
\newline
\newline
\textbf{Remark:} In case of a positive answer to the above question, the residually finite condition on the fundamental group in the statement of the main theorem would drop.


\begin{thebibliography}{CJY}




\bibitem[B1]{B1}
D. Bolotov, {\em Gromov's macroscopic dimension conjecture}, Algebraic \& Geometric Topology, 6 (2006), 1669-1676.

\bibitem[B2]{B2}
D. Bolotov, {\em Macroscopic dimension of 3-manifolds}, Mathematical Physics, Analysis and Geometry, vol 6, issue 3 (2003), 291-299.

\bibitem[BD]{BD}
    D. Bolotov, A. Dranishnikov
\newblock {\em   On Gromov's curvature conjecture for totally non-spin manifolds, }
\newblock {Journal of Topology and Analysis Vol.8, No.4 (2016) 571-587.}

\bibitem[BD2]{BD2}
    D. Bolotov, A. Dranishnikov
\newblock {\em   On Gromov's scalar curvature conjecture, }
\newblock {Proc. AMS
 138 (2010), N 3,4, P.  1517 - 1524.}



\bibitem [DK]{DK}
J.K. Davis, P. Kirk, {\it Lecture notes in algebraic topology}, Graduate Studies in Mathematics, vol.35, Amer. Mathe. Soc., Providence, RI (2001).




\bibitem[Dr1]{Dr1} A. Dranishnikov,
{\em On Gromov's positive scalar curvature conjecture for virtual duality groups}, Journal of Topology and Analysis, Vol. 6, No. 3 (2014) 397-419.



\bibitem[G1]{G1} M. Gromov  {\em Filling Riemannian manifolds}. J. Differential Geom. 18 (1983), no. 1, 1-147.

\bibitem[G2]{G2}
 M. Gromov
 \newblock {Positive curvature, macroscopic dimension, spectral gaps and higher signatures},
\newblock {Functional analysis on the eve of the 21st century. Vol. II, Birkhauser, Boston,
MA, (1996).}

\bibitem[G3]{G3}
M. Gromov, Metric structures for Riemannian and non-Riemannian spaces, Progress
in Math. 152, Birkhauser, Boston (1999).


 \bibitem[GL]{GL}
 M. Gromov, H.B. Lawson,
 \newblock {Positive scalar curvature and the Dirac operator on
 complete Riemannian manifolds},
 \newblock {Publ. Math. I.H.E.S. {\bf 58}  (1983), 295-408.}

\bibitem[Ha]{Ha}
 A. Hatcher, {\it Algebraic Topology}, Cambridge University Press, Cambridge (2002).





\bibitem[Ra]{Ra}
J. G. Ratcliffe,
{\em Finiteness conditions for groups}. Journal of Pure and Applied Algebra 27 (1983) 173-185, North-Holland Publishing Company.

\bibitem[R1]{R1}
J. Rosenberg,
{\em C*-algebras, positive scalar curvature, and the Novikov conjecture.}
Publications Mathématiques de l'IHÉS, 58 (1983), p. 197-212.

\bibitem[Sch]{Sch} Thomas Schick, {\em A counterexample to the (unstable) Gromov-Lawson-Rosenberg conjecture, }Topology 37 (1998), no. 6, 1165-1168.

\bibitem[SY]{SY} R. Schoen and S. T. Yau, {\em On the structure of manifolds with positive scalar curvature,}
Manuscripta Math. 28 (1979), no. 1-3, 159-183.

\bibitem[SY2]{SY2} R. Schoen and S. T. Yau, {\em Positive Scalar Curvature and Minimal Hypersurface Singularities} Preprint (2017) arXiv:1704.05490.


\bibitem
[W]{W} C.T.C. Wall,
Geometrical connectivity I, J. London Math. Soc. (1971) s2-3 (4): 597-604.

\end{thebibliography}
\end{document}